\documentclass[final,1p,times]{elsarticle}

\usepackage{amssymb}

\usepackage{theorem}
\usepackage[mathscr]{eucal}
\usepackage{hyperref}
\usepackage{color}

\newtheorem{prop}{}[section]

{\theorembodyfont{\upshape} \newtheorem{rema}[prop]{}}

\begin{document}

\makeatletter
\def\ps@pprintTitle{%
   \let\@oddhead\@empty
   \let\@evenhead\@empty
   \let\@oddfoot\@empty
   \let\@evenfoot\@oddfoot
}
\makeatother


\newcommand{\boma}[1]{{\mbox{\boldmath $#1$} }}
\newcommand{\smal}[1]{{\mbox{\small $#1$} }}
\hyphenation{uni-que-ness}
\newcommand{\uper}[1]{\stackrel{\barray{c} {~} \\ \mbox{\footnotesize{#1}}\farray}{\longrightarrow} }
\newcommand{\nop}[1]{ \|#1\|_{\piu} }
\newcommand{\no}[1]{ \|#1\| }
\newcommand{\nom}[1]{ \|#1\|_{\meno} }
\newcommand{\uu}[1]{e^{#1 \AA}}
\newcommand{\UD}[1]{e^{#1 \Delta}}
\newcommand{\bb}[1]{\mathbb{{#1}}}
\newcommand{\HO}[1]{\bb{H}^{{#1}}}
\newcommand{\Hz}[1]{\bb{H}^{{#1}}_{\zz}}
\newcommand{\Hs}[1]{\bb{H}^{{#1}}_{\ss}}
\newcommand{\Hg}[1]{\bb{H}^{{#1}}_{\gg}}
\newcommand{\HM}[1]{\bb{H}^{{#1}}_{\so}}
\newcommand{\HD}[1]{\mathbf{H}^{{#1}}_{\so}}
\newcommand{\CD}[1]{\mathbf{C}^{{#1}}}
\newcommand{\CM}[1]{\bb{C}^{{#1}}_{\so}}
\newcommand{\Cm}[1]{\bb{C}^{{#1}}}
\newcommand{\hz}[1]{H^{{#1}}_{\zz}}
\newcommand{\ED}[1]{\mathbf{E}^{{#1}}_{\so}}
\newcommand{\EDs}[1]{\mathbf{E}^{{#1}}}
\newcommand{\EDnueta}[1]{\mathbf{E}^{{#1}}_{\so,\, \nu \eta}}

\def\vi{\varv}
\def\bi{c}
\def\ci{g}
\def\Vi{V}
\def\Bi{C}
\def\uz{u_{0}}
\def\bz{b_{0}}
\def\wz{w_{0}}
\def\viz{\varv_{0}}
\def\biz{{\bi}_0}
\def\bvi{\mathbf{v}}
\def\bw{\mathbf{w}}
\def\bu{\mathbf{u}}
\def\buz{\mathbf{u}_0}
\def\bwz{\mathbf{w}_0}
\def\bviz{\mathbf{v}_0}
\def\bA{\boma{\mathscr{A}}}
\def\bP{\boma{\mathscr{P}}}
\def\bL{\mathbf{L}}
\def\bDd{\mathbf{D}}
\def\nab{\nabla}
\def\tild{~}
\def\Jj{{\mathscr{J}}}
\def\crit{crit}
\def\rbnw{0.51}
\def\rbnwp{0.52}
\def\reybnw{7.84}
\def\reybnwp{8.01}
\def\rtg{2.8}
\def\rtgp{2.9}
\def\reytg{5.07}
\def\reytgp{5.27}
\def\rkm{0.61}
\def\rkmp{0.62}
\def\reykm{1.00}
\def\reykmp{1.02}
\def\oh{O(d,\interi)}
\def\ohz{\oh \ltimes \Td}
\def\ohperz{\oh \times \Td}
\def\uno{{\bf 1}}
\def\t{{\tt{T}}}
\def\Aa{\mathscr{A}}
\def\Ti{\mathscr{T}}
\def\Isot{{\mathcal H}}
\def\PIsot{{\mathcal H}^{-}}
\def\Isott{{\mathcal H}_r}
\def\PIsott{{\mathcal H}^{-}_r}
\def\Rey{Re}
\def\ug{u^G}
\def\gaamma{\widehat{\gamma}}
\def\Ddd{\widehat{\Dd}}
\def\Rrr{\widehat{\Rr}}
\def\Tcc{\widehat{T}_{\tt{c}}}
\def\eep{\widehat{\epsilon}}
\def\mum{\hat{\mu}}
\def\R{{R}\,}
\def\ti{{\tt{t}}}
\def\ef{\psi}
\def\fun{\mathcal{F}}
\def\fun{{\tt f}}
\def\tvainf{\vspace{-0.4cm} \barray{ccc} \vspace{-0,1cm}{~}
\\ \vspace{-0.2cm} \longrightarrow \\ \vspace{-0.2cm} \scriptstyle{T \vain + \infty} \farray}
\def\De{F}
\def\er{\epsilon}
\def\erd{\er_0}
\def\Tn{T_{\star}}
\def\Tc{T_{\tt{c}}}
\def\Tb{T_{\tt{b}}}
\def\Tl{\mathscr{T}}
\def\Tm{T}
\def\Ta{T_{\tt{a}}}
\def\ua{u_{\tt{a}}}
\def\ba{b_{\tt{a}}}
\def\bua{{\bu}_{\tt{a}}}
\def\Tg{T_{G}}
\def\Tgg{T_{I}}
\def\Tw{T_{w}}
\def\Ts{T_{\Ss}}
\def\Tr{\Tl}
\def\Sp{\Ss'}
\def\Tsp{T_{\Sp}}
\def\vsm{\vspace{-0.1cm}\noindent}
\def\comple{\scriptscriptstyle{\complessi}}
\def\nume{0.407}
\def\numerob{0.00724}
\def\deln{7/10}
\def\delnn{\dd{7 \over 10}}
\def\e{c}
\def\p{p}
\def\z{z}
\def\symd{{\mathfrak S}_d}
\def\del{\omega}
\def\Del{\delta}
\def\Di{\Delta}
\def\Ss{{\mathscr{S}}}
\def\Ww{{\mathscr{W}}}
\def\mmu{\hat{\mu}}
\def\rot{\mbox{rot}\,}
\def\curl{\mbox{curl}\,}
\def\Mm{\mathscr M}
\def\XS{\boma{x}}
\def\TS{\boma{t}}
\def\Lam{\boma{\eta}}
\def\DS{\boma{\rho}}
\def\KS{\boma{k}}
\def\LS{\boma{\lambda}}
\def\PR{\boma{p}}
\def\VS{\boma{v}}
\def\ski{\! \! \! \! \! \! \! \! \! \! \! \! \! \!}
\def\h{L}
\def\Rr{{\mathscr{R}}}
\def\Zz{{\mathscr{Z}}}
\def\E{{\mathscr E}}
\def\FFf{\mathscr{F}}
\def\A{F}
\def\Xim{\Xi_{\meno}}
\def\Ximn{\Xi_{n-1}}
\def\lan{\lambda}
\def\om{\omega}
\def\Om{\Omega}
\def\Sim{\Sigm}
\def\Sip{\Delta \Sigm}
\def\Sigm{{\mathscr{S}}}
\def\Ki{{\mathscr{K}}}
\def\Hi{{\mathscr{H}}}
\def\zz{{\scriptscriptstyle{0}}}
\def\ss{{\scriptscriptstyle{\Sigma}}}
\def\gg{{\scriptscriptstyle{\Gamma}}}
\def\so{\ss \zz}
\def\Dv{\bb{\DD}'}
\def\Dz{\bb{\DD}'_{\zz}}
\def\Ds{\bb{\DD}'_{\ss}}
\def\Dsz{\bb{\DD}'_{\so}}
\def\Dg{\bb{\DD}'_{\gg}}
\def\Ls{\bb{L}^2_{\ss}}
\def\Lg{\bb{L}^2_{\gg}}
\def\bF{{\bb{V}}}
\def\Fz{\bF_{\zz}}
\def\Fs{\bF_\ss}
\def\Fg{\bF_\gg}
\def\Pre{P}
\def\UU{{\mathcal U}}
\def\fiapp{\phi}
\def\PU{P1}
\def\PD{P2}
\def\PT{P3}
\def\PQ{P4}
\def\PC{P5}
\def\PS{P6}
\def\Q{P6}
\def\X{Q2}
\def\Xp{Q3}
\def\K{V}
\def\Ks{\bb{\K}_\ss}
\def\Kz{\bb{\K}_0}
\def\KM{\bb{\K}_{\, \so}}
\def\HGG{\bb{H}^\G}
\def\HG{\bb{H}^\G_{\so}}
\def\EG{{\mathfrak{P}}^{\G}}
\def\G{G}
\def\de{\delta}
\def\esp{\sigma}
\def\dd{\displaystyle}
\def\LP{\mathfrak{L}}
\def\dive{\mbox{div}}
\def\la{\langle}
\def\ra{\rangle}
\def\um{u_{\meno}}
\def\uv{\mu_{\meno}}
\def\Fp{ {\textbf F_{\piu}} }
\def\Ff{ {\textbf F} }
\def\Fm{ {\textbf F_{\meno}} }
\def\Eb{ {\textbf E} }
\def\piu{\scriptscriptstyle{+}}
\def\meno{\scriptscriptstyle{-}}
\def\omeno{\scriptscriptstyle{\ominus}}
\def\Tt{ {\mathscr T} }
\def\Xx{ {\textbf X} }
\def\Yy{ {\textbf Y} }
\def\Ee{ {\textbf E} }
\def\VP{{\mbox{\tt VP}}}
\def\CP{{\mbox{\tt CP}}}
\def\cp{$\CP(f_0, t_0)\,$}
\def\cop{$\CP(f_0)\,$}
\def\copn{$\CP_n(f_0)\,$}
\def\vp{$\VP(f_0, t_0)\,$}
\def\vop{$\VP(f_0)\,$}
\def\vopn{$\VP_n(f_0)\,$}
\def\vopdue{$\VP_2(f_0)\,$}
\def\leqs{\leqslant}
\def\geqs{\geqslant}
\def\mat{{\frak g}}
\def\tG{t_{\scriptscriptstyle{G}}}
\def\tN{t_{\scriptscriptstyle{N}}}
\def\TK{t_{\scriptscriptstyle{K}}}
\def\CK{C_{\scriptscriptstyle{K}}}
\def\CN{C_{\scriptscriptstyle{N}}}
\def\CG{C_{\scriptscriptstyle{G}}}
\def\CCG{{\mathscr{C}}_{\scriptscriptstyle{G}}}
\def\tf{{\tt f}}
\def\ta{{\tt a}}
\def\tc{{\tt c}}
\def\tF{{\tt R}}
\def\C{{\mathscr C}}
\def\PPP{{\mathscr P}}
\def\P{{\mathscr P}}
\def\V{{\mathscr V}}
\def\TI{\tilde{I}}
\def\TJ{\tilde{J}}
\def\Lin{\mbox{Lin}}
\def\Hinfc{ H^{\infty}(\reali^d, \complessi) }
\def\Hnc{ H^{n}(\reali^d, \complessi) }
\def\Hmc{ H^{m}(\reali^d, \complessi) }
\def\Hac{ H^{a}(\reali^d, \complessi) }
\def\Dc{\DD(\reali^d, \complessi)}
\def\Dpc{\DD'(\reali^d, \complessi)}
\def\Sc{\SS(\reali^d, \complessi)}
\def\Spc{\SS'(\reali^d, \complessi)}
\def\Ldc{L^{2}(\reali^d, \complessi)}
\def\Lpc{L^{p}(\reali^d, \complessi)}
\def\Lqc{L^{q}(\reali^d, \complessi)}
\def\Lrc{L^{r}(\reali^d, \complessi)}
\def\Hinfr{ H^{\infty}(\reali^d, \reali) }
\def\Hnr{ H^{n}(\reali^d, \reali) }
\def\Hmr{ H^{m}(\reali^d, \reali) }
\def\Har{ H^{a}(\reali^d, \reali) }
\def\Dr{\DD(\reali^d, \reali)}
\def\Dpr{\DD'(\reali^d, \reali)}
\def\Sr{\SS(\reali^d, \reali)}
\def\Spr{\SS'(\reali^d, \reali)}
\def\Ldr{L^{2}(\reali^d, \reali)}
\def\Hinfk{ H^{\infty}(\reali^d, \KKK) }
\def\Hnk{ H^{n}(\reali^d, \KKK) }
\def\Hmk{ H^{m}(\reali^d, \KKK) }
\def\Hak{ H^{a}(\reali^d, \KKK) }
\def\Dk{\DD(\reali^d, \KKK)}
\def\Dpk{\DD'(\reali^d, \KKK)}
\def\Sk{\SS(\reali^d, \KKK)}
\def\Spk{\SS'(\reali^d, \KKK)}
\def\Ldk{L^{2}(\reali^d, \KKK)}
\def\Knb{K^{best}_n}
\def\sc{\cdot}
\def\k{\mbox{{\tt k}}}
\def\x{\mbox{{\tt x}}}
\def\g{ {\textbf g} }
\def\QQQ{ {\textbf Q} }
\def\AAA{ {\textbf A} }
\def\gr{\mbox{gr}}
\def\sgr{\mbox{sgr}}
\def\loc{\mbox{loc}}
\def\PZ{{\Lambda}}
\def\PZAL{\mbox{P}^{0}_\alpha}
\def\epsilona{\epsilon^{\scriptscriptstyle{<}}}
\def\epsilonb{\epsilon^{\scriptscriptstyle{>}}}
\def\lgraffa{ \mbox{\Large $\{$ } \hskip -0.2cm}
\def\rgraffa{ \mbox{\Large $\}$ } }
\def\restriction{\upharpoonright}
\def\M{{\scriptscriptstyle{M}}}
\def\m{m}
\def\Fre{Fr\'echet~}
\def\I{{\mathcal N}}
\def\ap{{\scriptscriptstyle{ap}}}
\def\fiap{\varphi_{\ap}}
\def\dfiap{{\dot \varphi}_{\ap}}
\def\DDD{ {\mathfrak D} }
\def\BBB{ {\textbf B} }
\def\EEE{ {\textbf E} }
\def\GGG{ {\textbf G} }
\def\TTT{ {\textbf T} }
\def\KKK{ {\textbf K} }
\def\HHH{ {\textbf K} }
\def\FFi{ {\bf \Phi} }
\def\GGam{ {\bf \Gamma} }
\def\sc{ {\scriptstyle{\bullet} }}
\def\a{a}
\def\ep{\epsilon}
\def\c{\kappa}
\def\parn{\par \noindent}
\def\teta{M}
\def\elle{L}
\def\ro{\rho}
\def\al{\alpha}
\def\si{\sigma}
\def\be{\beta}
\def\ga{\gamma}
\def\te{\vartheta}
\def\ch{\chi}
\def\et{\eta}
\def\complessi{{\bf C}}
\def\len{{\bf L}}
\def\reali{{\bf R}}
\def\interi{{\bf Z}}
\def\Z{{\bf Z}}
\def\naturali{{\bf N}}
\def\Sfe{ {\bf S} }
\def\To{ {\bf T} }
\def\Td{ {\To}^d }
\def\Tt{ {\To}^3 }
\def\Zd{ \interi^d }
\def\Zt{ \interi^3 }
\def\Zet{{\mathscr{Z}}}
\def\Ze{\Zet^d}
\def\T1{{\textbf To}^{1}}
\def\es{s}
\def\ee{{E}}
\def\FF{\mathcal F}
\def\FFu{ {\textbf F_{1}} }
\def\FFd{ {\textbf F_{2}} }
\def\GG{{\mathcal G} }
\def\EE{{\mathcal E}}
\def\KK{{\mathcal K}}
\def\PP{{\mathcal P}}
\def\PPP{{\mathscr P}}
\def\PN{{\mathcal P}}
\def\PPN{{\mathscr P}}
\def\QQ{{\mathcal Q}}
\def\J{J}
\def\Np{{\hat{N}}}
\def\Lp{{\hat{L}}}
\def\Jp{{\hat{J}}}
\def\Pp{{\hat{P}}}
\def\Pip{{\hat{\Pi}}}
\def\Vp{{\hat{V}}}
\def\Ep{{\hat{E}}}
\def\Gp{{\hat{G}}}
\def\Kp{{\hat{K}}}
\def\Ip{{\hat{I}}}
\def\Tp{{\hat{T}}}
\def\Mp{{\hat{M}}}
\def\La{\Lambda}
\def\Ga{\Gamma}
\def\Si{\Sigma}
\def\Upsi{\Upsilon}
\def\Gam{\Gamma}
\def\Gag{{\check{\Gamma}}}
\def\Lap{{\hat{\Lambda}}}
\def\Upsig{{\check{\Upsilon}}}
\def\Kg{{\check{K}}}
\def\ellp{{\hat{\ell}}}
\def\j{j}
\def\jp{{\hat{j}}}
\def\BB{{\mathcal B}}
\def\LL{{\mathcal L}}
\def\MM{{\mathcal U}}
\def\SS{{\mathcal S}}
\def\DD{D}
\def\Dd{{\mathcal D}}
\def\VV{{\mathcal V}}
\def\WW{{\mathcal W}}
\def\OO{{\mathcal O}}
\def\RR{{\mathcal R}}
\def\TT{{\mathcal T}}
\def\AA{{\mathcal A}}
\def\CC{{\mathcal C}}
\def\JJ{{\mathcal J}}
\def\NN{{\mathcal N}}
\def\HH{{\mathcal H}}
\def\XX{{\mathcal X}}
\def\XXX{{\mathscr X}}
\def\YY{{\mathcal Y}}
\def\ZZ{{\mathcal Z}}
\def\CC{{\mathcal C}}
\def\cir{{\scriptscriptstyle \circ}}
\def\circa{\thickapprox}
\def\vain{\rightarrow}
\def\salto{\vskip 0.2truecm \noindent}
\def\saltino{\vskip 0.1truecm \noindent}
\def\spazio{\vskip 0.5truecm \noindent}
\def\vs1{\vskip 1cm \noindent}
\def\fine{\hfill $\square$}
\def\ffine{\hfill $\lozenge$ \vskip 0.2cm \noindent}
\newcommand{\rref}[1]{(\ref{#1})}
\def\beq{\begin{equation}}
\def\feq{\end{equation}}
\def\ben{\begin{eqnarray}}
\def\een{\end{eqnarray}}
\def\barray{\begin{array}}
\def\farray{\end{array}}
\def\pa{\nabla}
\def\ler{\LP}

\makeatletter \@addtoreset{equation}{section}
\renewcommand{\theequation}{\thesection.\arabic{equation}}
\makeatother

\begin{frontmatter}


\title{\vspace{-0.5cm} A global stability result for incompressible magnetohydrodynamics}


\author[LP1,LP2]{Livio Pizzocchero}

\address[LP1]{Dipartimento di Matematica, Universit\`a di Milano, Via C. Saldini 50, I-20133 Milano, Italy}

\address[LP2]{Istituto Nazionale di Fisica Nucleare, Sezione di Milano, Via G. Celoria 16, I-20133 Milano, Italy}

\ead{livio.pizzocchero@unimi.it}

\author[ET]{Emanuele Tassi}

\address[ET]{Universit\'e C\^ote d’Azur, Observatoire de la C\^ote d’Azur, CNRS,   
Laboratoire J.L. Lagrange, \\ Boulevard de l’Observatoire, CS 34229, 06304 Nice Cedex 4, France}

\ead{etassi@oca.eu}

\begin{abstract}
We propose a result of global stability for the equations of homogeneous, incompressible magnetohydrodynamics (MHD) on 
a torus of any dimension $d \in \{2,3,...\}$, with positive viscosity and resistivity. This result applies to the $C^\infty$ global solutions, with a 
conveniently defined decay property for large times; it is expressed by fully explicit estimates,
formulated via $H^p$-type Sobolev norms of arbitrarily high order $p$. 
The present stability result is similar to that proposed 
by one of us for the Navier-Stokes (NS) equation \cite{glosta}; it is derived from a suitable formulation 
of the MHD equations proposed in our previous work \cite{MHD}, emphasizing strong structural analogies with the NS case.
A basic tool in the proof of the present stability result is a general theory of approximate solutions of the MHD
Cauchy problem, that we developed in \cite{MHD} on the grounds of previous results on the NS equation \cite{smooth} and 
of the above structural similarities.
We also introduce a class of Beltrami-type initial data for the MHD equations; although being arbitrarily large, 
these data produce global and decaying MHD solutions, fitting the framework of the present stability result.
Comparisons with the previous literature on these subjects are performed. 
\end{abstract}
\hbox{\begin{keyword}
\hbox{NS and MHD equations \sep existence and regularity theory \sep global stability.
\MSC 76W05 \sep 35Q30 \sep 76D03 \sep 76D05.}
\end{keyword}}
\end{frontmatter}


{~}
\vskip -0.6cm \noindent
\section{Introduction}
\label{intro}
\noindent
\textbf{Status of the art on global stability of the homogeneous, incompressible (NS and) MHD equations.}
The equations of magnetohydrodynamics (MHD) are deeply related to the Navier-Stokes (NS) equation.
The homogeneous, incompressible NS and MHD equations in $d \geqs 2$ space dimentions read, respectively:
\beq {\partial u \over \partial t}= \nu \Delta u - (u \sc \pa) u - \pa p~; \label{nseq} \feq 
\beq
{\partial u \over \partial t}= \nu \Delta u - (u \sc \pa) u + (b \sc \pa) b - \pa (p + {1 \over 2} |b|^2)~,   \qquad
{\partial b \over \partial t} = \eta \Delta b - (u \sc \pa) b + (b \sc \pa) u~.  \label{bbeq}
\feq
Both in \rref{nseq} and in \rref{bbeq}, one considers a $d$-dimensional spatial domain $\Omega$, which can be either
$\reali^d$, or a domain in $\reali^d$, or the torus $\Td$ (see the forthcoming Eq. \rref{torus}). Moreover:
$u$ and $b$ are, respectively, the velocity and
magnetic fields,  two \emph{divergence-free} vector fields depending on time $t$ and on
the space variables $x = (x_1,...,x_d) \in \Omega$; $\nab = (\partial_1,...,\partial_d)$ is the gradient and,
for each vector field $\vi : \Omega \to \reali^d$, $\vi \sc \nab$ is the differential operator
$\sum_{s=1}^d \vi_s \partial_s$;
$p = p(x,t)$ is the pressure; $\Delta$ is the Laplacian; the constants $\nu, \eta \geqs 0$ are the viscosity and the resistivity, 
respectively;
the above equations assume the use of units in which the constant density of the fluid equals unity
({\footnote{
For $d=3$, one can write Eqs. \rref{nseq} \rref{bbeq} in a more
familiar form using the identities $(u \sc \nabla) u = \nabla(|u|^2/2) + (\mbox{rot} \, u) \wedge u$,
$(b \sc \nabla) b = \nabla(|b|^2/2) + (\mbox{rot} \, b) \wedge b$
and
$- (u \sc \nabla) b + (b \sc \nabla) u = \mbox{rot}(u \wedge b)$
(with $\wedge$ the usual vector product; the first two identities
hold for all vector fields $u, b$ on $\Omega$, the third one holds for all divergence free vector fields $u,b$).
Since Eqs. \rref{bbeq} 
have the usual form for $d=3$ but make sense
for any $d$, we keep the names ``MHD equations'', ``velocity field'', ``magnetic field'' for 
$d$ arbitrary; we are aware that, according to first physical principles,
a $d$-dimensional magnetic field
should be represented as a $2$-form, which is geometrically equivalent to a vector field only 
for $d=3$. }}). 
Needeless to say, for $b :=0$ the second equation in \rref{bbeq} is fulfilled, and the first 
equation in \rref{bbeq} takes the form \rref{nseq}. \par
\textsl{In the sequel of this paper the adjectives "homogeneous" and "incompressible" will always be omitted}, 
and we will simply refer to the NS and MHD equations. \par
As well known, after fixing a precise functional setting for the vector fields on $\Omega$ one
can introduce the Leray projection $\ler$ onto the space of divergence free vector fields, 
which annihilates gradients; after application of this projection the NS and MHD equations \rref{nseq} \rref{bbeq} become, 
respectively,
\beq {\partial u \over \partial t} = \nu \Delta u -\ler ( (u \sc \pa) u) \, ; \label{eqns} \feq
\beq
{\partial u \over \partial t}= \nu \Delta u -\ler ( (u \sc \pa) u) + \ler ( (b \sc \pa) b)~,  \qquad
{\partial b \over \partial t}= \eta \Delta b -\ler ( (u \sc \pa) b) + \ler ( (b \sc \pa) u)~.  \label{beq}
\feq
The main advantage of using $\ler$ is the disappearing of the pressure term $p$ that was present
in Eq. \rref{nseq} and in the first equation \rref{bbeq}. The application of $\ler$ to the second equation \rref{bbeq}
may appear to be pleonastic, since the right hand side of this second equation is automatically
divergence free (\footnote{The proof is as in the subsequent footnote \ref{fnx}, with the vector fields $\viz$ and
$\biz$ considered therein replaced, respectively, by $u$ and $b$.}) ; however $\ler$ is useful even in this case, for reasons on which we return later.
\par
Eq. \rref{eqns} is an evolution equation for the velocity field $u$, while
Eq. \rref{beq} gives a pair of evolution
equations for the velocity and magnetic fields $\bu = (u, b)$. It has been known for a long time that, sticking again to a suitable functional setting, 
one can prove the existence of a unique solution of maximal time domain 
for the Cauchy problems associated to \rref{eqns} or to \rref{beq}, 
for each specification of sufficiently regular initial data $u_0$ or $\buz = (u_0, b_0)$ 
(concerning the Cauchy problems and some related results, 
see \cite{BKM} \cite{Gig} \cite{Kat1} \cite{Kat2} \cite{Koz} \cite{Koz2} \cite{Tem}    and the books \cite{Lem} \cite{MaBe} 
as for the NS case, and \cite{Fef1} \cite{Fef2}  \cite{Sch88} \cite{Ser83} as for the MHD case; some
of these references are reviewed or commented in our works \cite{smooth} \cite{MHD}). Here and in the sequel, 
the term "solution" is always intended in the strong, or even in the classical sense; the time domain of any solution is an interval
$[0, T)$ with $T \in (0, + \infty]$, and the solution is called global if $T=+\infty$. \par
For $d=2$ it is known that, in appropriate functional settings, each initial datum gives rise to
a global (strong) solution, both in the case of the NS equation \rref{eqns} with $\nu \geqs 0$ \cite{Lad}, and in the case
of the MHD equations \rref{beq} with $\nu, \eta >0$ \cite{Ser83}. For $d \geqs 3$ and $\nu, \eta >0$, the existence of global
strong solutions is known for both the NS and MHD equations in the case of small initial data, and is 
an open problem for arbitrary data (it is hardly the case to recall here the Millennium problem
concerning global existence for the NS equation on $\reali^3$ or $\Tt$ with arbitrary initial data, in an appropriate
$C^\infty$ functional setting \cite{Fef}). \par
Let us come to the issue of global stability. If $\vi_0$
is an initial datum for the NS equation \rref{eqns}
giving rise to a global solution $\vi$, a global stability result for
this datum is a statement ensuring that, for any initial datum $u_0$ sufficiently
close to $\vi_0$, the corresponding solution $u$ is global as well; a statement of
this kind is typically accompanied by an estimate, indicating that the 
difference $u(t) - \vi(t)$ at any time $t \in [0, +\infty)$ is somehow controlled
by the data difference $u_0 - \vi_0$. Global stability results for the MHD equations
\rref{beq} have a similar structure, but of course the initial data and the 
corresponding solutions 
are pairs $\bviz = (\vi_0, c_0)$, $\bvi = (\vi,c)$, $\buz = (u_0, b_0)$ and $\bu = (u, b)$. 
\par
Global stability results have been known for a long time for both Eqs. \rref{eqns} and 
\rref{beq}; these typically hold for all initial data with suitable 
regularity features, including large data. \par
In 1994, Ponce, Racke, Sideris and Titi \cite{Tit} proved the global stability of the NS equation \rref{eqns}, 
on $\reali^3$ or on a domain therein, for all $H^1$-type initial data $\vi_0$ yielding a solution $\vi$ such that
$\int_{0}^{+\infty} d t \, \| \nabla \vi(t) \|^4_{L^2} < + \infty$; the basic estimates on the instantaneous separation
between $\vi$ and a solution $u$ with initial datum close to $\vi_0$ are given via $H^1$-type norms 
({\footnote{Indeed,
the results of \cite{Tit} are more general and also apply to the NS case with external forces, not considered
in the present work. The possibility of external forces is considered as well in paper \cite{ZhaoLi} 
about MHD equations, cited shortly afterwards.}}). Global stability of the NS equation was discussed (in different functional settings) until recent times, 
see e.g. \cite{Ben} \cite{Enc} \cite{glosta}; the last one of these papers is a contribution by one of us, which inspired
the present work on the MHD equations. \par 
The issue of global stability for the MHD equations can be introduced recalling that Zhao and Li \cite{ZhaoLi} proved in 2001
the global stability of equations \rref{beq} on $\reali^3$ (or on domains therein) with $\nu, \eta >0$, 
for all $H^1$-type initial data producing solutions with suitable decay properties. 
The functional setting of \cite{ZhaoLi} is similar to that introduced in \cite{Tit} for the NS equation, 
and the results of \cite{ZhaoLi} can be described as follows when the spatial domain is the whole
$\reali^3$. Consider 
an initial datum $\bviz = (\vi_0, c_0)$ (formed by two $H^1$-type vector fields on $\reali^3$, representing 
the initial velocity and magnetic field); assume this datum produces for the MHD equations
a global strong solution $\bvi = (\vi, c)$ which is decaying in time, in the sense that
$\int_{0}^{+\infty} \| \nabla \bvi(t) \|^4_{L^2} < + \infty$
(with $\| \nabla \bvi(t) \|_{L^2} := \sqrt{\| \nabla \vi(t) \|^2_{L^2} +  \| \nabla c(t) \|^2_{L^2}}$). 
Then, there exists a real $\delta > 0$ such that, for all initial data $\buz = (u_0, b_0)$ fulfilling
$\| \buz - \bviz \|_{H^1} < \delta$,
the MHD equations with datum $\buz $ have a global strong solution $\bu = (u, b)$, and for all $t \geqs 0$ one has
$\| \bu(t) - \bvi(t) \|_{H^1} \leqs M(\delta)$, 
where $M(\delta) \to 0$ as $\delta \to 0$. \par
The global stability of the MHD equations in space dimension $d=3$ was the subject of subsequent investigations, falling outside
the setting of \cite{ZhaoLi}.
A special attention was devoted to 
the MHD equations on $\reali^3$ with an initial datum of the specific form $\bviz = (0, B_0)$, where $B_0 \in \reali^3$ is a constant 
vector (zero velocity and constant 
magnetic field); this case was analysed for $\nu = \eta \geqs 0$ \cite{HXY} and for $\nu >0$, $\eta =0$
\cite{Abi} \cite{Deng}. The cited works used for their estimates Sobolev norms of the types $W^{N,1}, W^{N,\infty}$ or $H^N$
(including weighted variants), with integer $N$. \par
A very recent paper \cite{Car} discusses (amongst else) the global stability of the mean zero solutions of the MHD equations on the torus $\Tt$
with $\nu, \eta > 0$, 
using $H^k$ type norms of integer order $k$. In the panorama of the previous literature on global MHD stability, the cited reference
is the one to which our present contribution is closest; let us sketch the stability result presented therein.
Theorem 3.1 in \cite{Car}
considers
a global, smooth, mean zero solution $\bvi$ of the MHD equations on $\Tt$, with initial datum $\bvi_0$, and proves the following:
if $r \in \naturali \setminus \{0\}$, $\bvi$ fulfills a suitable decay condition (depending on $r$),
and $\buz$ is an initial datum with
$\| \buz - \bviz \|_{H^r}$ sufficiently small, then the corresponding MHD solution $\bu$
is global as well and $\bu - \bvi$ fulfills bounds, having essentially the form
\beq \| \bu(t) - \bvi(t) \|_{H^k} \leqs \mbox{const.} \times 
\| \buz - \bviz \|_{H^k} \, e^{-\theta \mu t} \quad \mbox{for $t \geqs 0$, $k=0,1,...,r$ and any $\theta \in (0,1)$}; \label{eqcar} \feq 
here $\mu := \min(\nu, \eta)$, and the above constants depend on $\bvi$ and $\theta$. We return later on \cite{Car} 
and on the previous literature for a comparison with our present contribution (see the comments
after Eq. \rref{ourb}, and Remarks \ref{remcit} (iii) (iv) referring to 
the decay conditions in the present work and in \cite{Car}). \par
For completeness, let us mention that global stability results are available for some variants of the MHD equations 
\rref{beq} on $\reali^3$, accounting for the Hall effect
\cite{BeFe} or assuming partial dissipation and resistivity \cite{WuZu}; 
these variants will not be considered in the present work.
\saltino
\textbf{Aims and contents of the present paper.} The aims of the present work are: 
\begin{itemize}
\item[(i)] To present a global stability result
for the MHD equations \rref{beq} on the torus $\Td$ of any dimension $d \geqs 2$, 
with $\nu, \eta >0$; this result refers to a $C^\infty$ setting for the MHD equations, and 
is expressed by fully quantitative estimates via $H^p$-type norms of arbitrarily high order $p$. 
\item[(ii)] To introduce a class of \textsl{arbitrarily large} MHD initial data, yielding global solutions
to which one can apply the result in (i). These data will be called
\emph{generalized Beltrami pairs}, by analogy with the generalized Beltrami flows
often considered in the NS framework.  
\end{itemize}
The $C^\infty$ setting employed here for the MHD equations was proposed in our work \cite{MHD}, 
and is similar to the one considered previously for the NS equation in some papers by one of us and coworkers
(see in particular \cite{smooth} \cite{glosta} and their bibliographies). So, the velocity field and the magnetic field
at any time are $C^\infty$ vector fields on $\Td$; their time dependence is as well $C^\infty$. Besides prescribing the velocity and
magnetic fields to be divergence free, for technical reasons we require the vanishing of their means over $\Td$.
\par
The above functional setting is described in Section \ref{secsetting}. Here we introduce the \Fre space
$\HM{\infty}$, formed by the $C^\infty$ vector fields on $\Td$ with vanishing divergence and mean; we characterize
this space as the intersection of suitably defined, $H^p$-type Sobolev spaces $\HM{p}$ ($p \in \reali$),
whose norms are indicated with $\| \, \|_p$ (see Eqs. \rref{defhn} \rref{definner} \rref{hinf} \rref{cinf}).
To deal with the MHD equations, which involve pairs of vector fields, it is useful to consider 
the product spaces $\HD{p} := \HM{p} \times \HM{p}$ ($p \in \reali$), whose norms are again denoted with $\| \, \|_p$; the 
intersection of these spaces, denoted with
$\HD{\infty}$, coincides in fact with $\HM{\infty} \times \HM{\infty}$ (see Eqs. \rref{inpbn} \rref{hdinf}). 
Again in
Section \ref{secsetting}, we present formally the NS and MHD equations and their Cauchy problems, with $\nu, \eta >0$; 
the solutions are $C^\infty$ maps from a time interval to $\HM{\infty}$ or $\HD{\infty}$. Following \cite{MHD}, \emph{strong
structural analogies are emphasized between the NS and the MHD case}; these concern, especially, the fundamental bilinear
maps appearing in the right hand sides of Eqs. \rref{eqns} \rref{beq}. Certain inequalities about these bilinear maps, 
reviewed in the same section, contain constants $K_{n p}, G_{n p}, \hat{K}_{n p}, \hat{G}_{n p}$ (associated to pairs
of Sobolev orders $n, p$), on which we have fully quantitative bounds from \cite{coga} \cite{cog} \cite{cok} \cite{MHD}; these 
constants are essential for the purposes of the present work. \par
In Section \ref{secappr} we review the basic tool yielding our stability result (i).
This tool is a general theory for the \textsl{a posteriori} analysis of approximate solutions of the MHD Cauchy 
problem; we proposed such a theory in \cite{MHD} on the grounds of the above mentioned analogies with the NS equation, 
and of previous works on approximants for the NS Cauchy problem (see \cite{smooth} and
other works by one of us and coworkers, cited therein; this approach was deeply inspired by \cite{Che}). 
In few words, Section \ref{secappr} and \cite{MHD} consider the MHD
Cauchy problem with an initial datum $\buz$, and an approximate solution $\bua$ of this problem,
fulfilling the evolution equations and the initial conditions up to certain errors. A general method
allows to determine a lower bound on the interval of existence of the exact solution $\bu$ and
upper bounds on the Sobolev distances $\| \bu(t) - \bua(t) \|_n$, $\| \bu(t) - \bua(t) \|_p$ for some real $n > d/2 +1$ and 
all real $p > n$,
obtained by first solving a Riccati type, ordinary differential (in)equality corresponding
to the Sobolev order $n$, and then some linear, ordinary differential (in)equalities corresponding
to the Sobolev orders $p > n$; these are determined by the errors of $\bua$, and 
referred to as the "control (in)equalities" of orders $n$ or $p$. The connection between this
framework and our global stability result for the MHD equations is sketched in the sequel of the
present Introduction.
\par
Sections \ref{glodec}, \ref{mainglo} and \ref{belt} contain the main results of the paper, corresponding to the previous items (i) and (ii).
\par In Section \ref{glodec} we introduce the notion of global, decaying solution of the MHD equations \rref{beq}; this is 
a global solution $\bvi$ such that 
\beq \| \bvi(t) \|_n \to 0 \qquad \mbox{as $t \to + \infty$}, \feq
for some real $n > d/2+1$; this condition is in fact equivalent to the (seemingly stronger) requirement
that, for each real $p$, $\| \bvi(t) \|_p$ decays like $e^{-\mu t}$, where again $\mu := \min(\nu,\eta) >0$ (see Proposition \ref{lemdec}). Any
sufficiently small initial datum $\bviz$ produces a global, decaying solution; more precisely, such a solution occurs if
$\| \bviz \|_n$ is below a known bound for some $n > d/2+1$ (see Lemma \ref{lemzero}). \par
In Section \ref{mainglo} we derive our global stability result for the MHD equations,
corresponding to Theorem \ref{proron}. Here we consider
an initial datum $\bviz$ giving rise to a global, decaying MHD solution $\bvi$, and we show that any datum $\buz$ sufficiently close
to $\bviz$ produces as well a global, decaying solution $\bu$. The closeness condition is a fully explicit upper bound on
$\| \buz - \bviz \|_n$, for some real $n > d/2+1$, implying estimates on $\bu - \bvi$; in the simplest version, such estimates have the form
\beq \| \bu(t) - \bvi(t) \|_p \leqs \mbox{const.} \, \| \buz - \bviz \|_p \,  e^{-\mu t} \quad \mbox{for $t \geqs 0$ and all real
$p \geqs n$}\,, \label{ourb} \feq
with explicit expressions for the above constants, that depend on $\bvi$ (see Eq. \rref{boundnhalf}). \par
Some features of Theorem \ref{proron} somehow mark differences with respect to the previous literature. One such feature is 
the availability of estimates in the Sobolev norms of arbitrarily high Sobolev order $p$, in spite of a closeness condition of a fixed
order $n$ for the initial data; another feature we would mention is the availability of fully quantitative information on the related constants.
These comments can be used, in particular, for a comparison with reference \cite{Car}; we recall our brief description of
that reference, centered upon the previous Eq. \rref{eqcar}. Let us add that the relations between our present work and \cite{Car} 
are conceptually similar to those between
the already mentioned papers  \cite{glosta} and \cite{Enc} on the global stability of the NS equation. \par
The proof that we propose for Theorem \ref{proron} has a feature
that we also presume to be of some interest: it is a simple, almost automatic of application of the general method 
proposed in our work \cite{MHD}, and summarized in Section \ref{secappr}, for the 
\textsl{a posteriori} analysis of approximants for the MHD Cauchy problem. The idea yielding the proof of
Theorem \ref{proron} is that the global, decaying MHD solution $\bvi$ with initial datum $\bviz$ can be regarded 
as an approximate solution of the MHD Cauchy problem with datum $\buz$ (to be precise, $\bvi$ satisfies exactly 
the MHD evolution equations, but fulfills the initial condition up to an error $\buz - \bviz$); so, 
the general theory of Section \ref{secappr} can be used to infer information about the exact solution
$\bu$ of this Cauchy problem and its separation from $\bvi$. \par
To conclude, let us describe the subject of Section \ref{belt}. Here we introduce 
the generalized Beltrami pairs mentioned in the previous item (ii), where they were
indicated as possible, arbitrarily large initial data for the MHD equations; these data yield global, decaying solutions. Beltrami vector fields and their
generalizations are well known characters in the framework of the NS equation; the extension of these notions 
to the MHD equations has been considered to some extent in the literature, with motivations and approaches
which match just partly the framework of Section \ref{belt} (see the references cited therein). 

\section{A smooth functional setting for the NS and MHD equations}
\label{secsetting}
\saltino
\textbf{Some function spaces} 
({\footnote{The present paragraph on function spaces is similar to a paragraph
in \cite{glosta} with the same title; we think that reproducing some basic notations and facts can be useful. 
Other similarities between the present work and \cite{MHD} or \cite{glosta} are due to our aim to make
the present paper self-contained, or are merely textual: in particular, the present work and \cite{glosta}
contain textually similar statements, which however concern different subjects
(the MHD equations in the present work, and the NS equation in \cite{glosta}).}).
Here and in the rest of the present 
paper we work on a torus
\beq \Td := (\reali/2 \pi \interi)^d \qquad (d = 2,3,...)~; \label{torus} \feq
for $a, b \in \complessi^d$ we intend $a \sc b := \sum_{r=1}^d a_r b_r$.
We consider the space $D'(\Td, \reali^d) \equiv
\mathbb{D}'$ of $\reali^d$-valued distributions on $\Td$ (and intend distributionally the
differential operators mentioned in the sequel);
each $\vi \in \mathbb{D}'$ has a weakly convergent
Fourier expansion $\vi = \sum_{k \in \Zd} \vi_k e_k$,
where $e_k(x) := (2 \pi)^{-d/2} e^{i k \sc x}$ and each 
coefficient $\vi_k = \overline{\vi_{-k}} \in \complessi^ d$ is provided by the action of $\vi$ on the test function $e_{-k}$.
The mean value $\la \vi \ra$ is, by definition, the action of $\vi$
on the constant test function $(2 \pi)^{-d}$, and $\la \vi \ra = (2 \pi)^{-d/2} \vi_0$.
The Laplacian of $\vi \in \mathbb{D}'$ has Fourier coefficients $(\Delta \vi)_k = -|k|^2 \vi_k$;
if $\la \vi \ra = 0$ and $p \in \reali$, we define $(-\Delta)^{p/2} \vi$
to be the element of $\mathbb{D}'$ with mean zero and Fourier coefficients $((-\Delta)^{p/2} \vi)_k =
|k|^p \vi_k$ for $k \in \Zd \setminus \{ 0 \}$.
Let us consider $L^2(\Td, \reali^ d) \equiv \bb{L}^2$,
with the inner product $\la \vi | w \ra_{L^2} := \int_{\,\Td} \overline{\vi} \sc w \, d x$
$= \sum_{k \in \Zd} \overline{\vi_k} \sc w_k$. For any $p \in \reali$, we introduce the
Sobolev-type space
\beq {~} \hspace{-0.5cm} \HM{p} :=
\{ \vi \in \mathbb{D}'~|~~\dive \, \vi = 0,~ \la \vi \ra = 0, ~
(-\Delta)^{p/2} \vi \in \mathbb{L}^2~\}
= \{ \vi \in \mathbb{D}'~|~~k \sc \vi_k = 0 ~\forall k \in \Zd,~~ \vi_0 = 0, \!
\sum_{k \in \Zd \setminus \{0\}} |k|^{2 p} | \vi_k |^2 < + \infty \}
\label{defhn} \feq
\vskip -0.2cm \noindent
(the subscripts ${\scriptscriptstyle{\Sigma}}$ and ${\scriptstyle{0}}$ indicate the vanishing
of the divergence and of the mean); this is a Hilbert space with
the inner product and the norm
\beq \la \vi | w \ra_p := \la (-\Delta)^{p/2} \vi |  (-\Delta)^{p/2} w \ra_{L^2}
= \! \! \! \! \sum_{k \in \Zd \setminus \{0 \}} |k|^{2 p} \overline{\vi_k} \sc w_k\, , \quad
\| \vi \|_p := \sqrt{\la \vi | \vi \ra_p} = \| (-\Delta)^{p/2} \vi \|_{L^2}~. \label{definner} \feq    
For real $p \geqs \ell$, one has $\HM{p} \subset \HM{\ell}$ and
$\|~\|_p \geqs \|~\|_\ell$. For future reference, let us mention the following: 
for each real $p$ and $\vi \in \HM{p+2}$, one
has $\Delta \vi \in \HM{p}$ and
\beq \| \Delta \vi \|_p = \| \vi \|_{p+2}~, \qquad 
\la \Delta \vi | \vi \ra_p = - \| \vi \|^2_{p+1} \leqs - \| \vi \|^2_p~; \label{lap2} \feq
the first of these equalities indicates that $\Delta$ is continuous (indeed, isometric) from
$\HM{p+2}$ to $\HM{p}$.
The vector space
\beq {~} \hspace{-0.5cm} \HM{\infty} := \bigcap_{p \, \in \, \reali} \, \HM{p} \label{hinf} \feq
can be equipped with the topology induced by the family
of all Sobolev norms $\|~\|_p$ ($p \in \reali)$, which coincides
with that induced by the countable subfamily $\|~\|_p$ ($p \in \naturali$); indeed, we have
a \Fre space. Due to the standard Sobolev imbeddings,
\beq \HM{\infty} = \{ \, \vi \in C^\infty(\Td,\reali^d)~|~~
\dive \, \vi = 0,~ \la \vi \ra = 0~\} \label{cinf} \feq
and the above mentioned topology on $\HM{\infty}$ coincides with that
induced by the family of norms $\| ~\|_{{\displaystyle{C}}^{\displaystyle{r}}}$ ($r \in \naturali$), where
$\| ~\|_{{\displaystyle{C}}^{\displaystyle{r}}}$ is the sup norm for the derivatives of all orders $\leqslant r$
(for $r \in \naturali$ and $p \in \reali$ one has $\| ~\|_{{\displaystyle{C}}^{\displaystyle{r}}} \leqslant \mbox{const.} \| \tild \|_p$ if $p > r + d/2\,$, and
$\| \tild \|_p \leqslant \mbox{const.} \| ~\|_{{\displaystyle{C}}^{\displaystyle{r}}}$ if $p \leqslant r$). \par
Obviously enough, $\Delta$ is continuous from $\HM{\infty}$ to itself. In the sequel we just occasionally consider for
$p \in \reali$ the spaces
\beq {~} \hspace{-0.5cm} \Hz{p} :=
\{ \vi \in \mathbb{D}'~|~~\la \vi \ra = 0, ~
(-\Delta)^{p/2} \vi \in \mathbb{L}^2~\}
= \{ \vi \in \mathbb{D}'~|~~\vi_0 = 0, \!
\sum_{k \in \Zd \setminus \{0\}} |k|^{2 p} | \vi_k |^2 < + \infty \}\,,
\label{defhnz} \feq
which carry inner products and norms defined as in \rref{definner}; their intersection
\beq {~} \hspace{-0.5cm} \Hz{\infty} := \bigcap_{p \, \in \, \reali} \, \Hz{p} = \{ \, \vi \in C^\infty(\Td,\reali^d)~|~~
\la \vi \ra = 0~\}\label{hinfz} \feq
is as well a \Fre space with the norms $\| \, \|_p$ ($p \in \reali$). $\HM{p}$ and $\HM{\infty}$ are closed subspaces 
of $\Hz{p}$ and $\Hz{\infty}$, respectively
\saltino
\textbf{Leray projection.} Working with the maximum generality, we can define this to be the map
\beq \ler : \mathbb{D}' \vain \mathbb{D}'~,\quad
\vi \mapsto \ler \vi~\mbox{such that
$(\ler \vi)_k = \ler_k \vi_k$ for $k \in \Zd$}~; \label{defler} \feq
here $\ler_k$ is the orthogonal projection of $\complessi^d$ onto $k^\perp = \{ a \in
\complessi^d~|~k \sc a = 0 \}$ (so that, for $a \in \complessi^d$, $\ler_k a = a
- (k \sc a) k/|k|^2$ if $k \in \Zd \setminus \{0\}$ and $\ler_0 a = a$). For each $\vi \in \mathbb{D}'$, we have:
$\dive(\ler \vi) = 0$; $\ler \vi=\vi$ if and only if $\dive \,\vi=0$; $\la \ler \vi \ra = \la \vi \ra$ 
and, in particular, $\la \ler \vi \ra = 0$ if $\la \vi \ra =0$.
If $p \in D'(\Td, \reali)$ is any real valued distribution, we have $\ler( \nab p)=0$. Finally, $\ler \bb{L}^2
\subset \bb{L}^2$.
\saltino
\textbf{The fundamental bilinear map for the NS equation.} Working with a good generality, let
$\vi \in L^\infty(\Td, \reali^d) \equiv \mathbb{L}^\infty$, and 
$w \in \mathbb{D}'$ such that $\partial_s w \in \mathbb{L}^2$ for $s=1,...d$. Then the vector field $(\vi \sc \nab) w$
$= \sum_{s=1}^d \vi_s \partial_s w$ is well defined and belongs to $\mathbb{L}^2$; we can form from it the vector field
\beq \PPP(\vi, w) := - \ler( (\vi \sc \nab) w) \in \mathbb{L}^2\,. \label{defpi} \feq
By construction, $\dive \PPP(\vi, w) =0$; moreover, if $\dive \, \vi=0$ we have $\la (\vi \sc \nab) w \ra =0$ 
(see, e.g., Lemma 2.1. in \cite{cog}) and,
consequently, $\la \PPP(\vi, w) \ra = 0$. The bilinear map $\PPP$ sending $\vi, w$ in the above mentioned spaces 
to $\PPP(\vi,w)$ is called in the sequel the \textsl{fundamental bilinear map} for the NS equation; 
$\PPP$ evidently appears in the NS equation \rref{eqns} (as well as in the MHD equations \rref{beq}, a 
fact on which we return later). Let $p, n \in \reali$; it is known that
$p > d/2$, $\vi \in \HM{p}$, $w \in \HM{p+1}$ $\Rightarrow$ $\PPP(\vi,w) \in \HM{p}$
and that, for $p,n$ as below, there are constants $K_{p n}$, $G_{p n}$ $\in (0,+\infty)$ such that the following holds:
\beq \| \PPP(\vi, w) \|_p \leqs {1 \over 2} K_{p n} ( \| \vi \|_p \| w \|_{n+1} + \| \vi \|_n \| w \|_{p+1})
\qquad \mbox{if $p \geqs n > d/2$, $\vi \in \HM{p}$\,, $w \in \HM{p+1}$}~,
\label{basineqa} \feq
\vskip -0.15cm \noindent
\beq | \la \PPP(\vi, w) | w \ra_p | \leqs
{1 \over 2} G_{p n} (\| \vi \|_p \| w \|_n + \| \vi \|_n \| w \|_p)\| w \|_p
\qquad \mbox{if $p \geqs n > d/2 + 1$, $\vi \in \HM{p}$\,, $w \in \HM{p+1}$}~.
\label{katineqa} \feq 
The above results indicate, amongst else, that $\PPP$ maps continuously
$\HM{p} \times \HM{p+1}$ to $\HM{p}$ for all $ p > d/2$ (use \rref{basineqa} with $n=p$), and $\HM{\infty}
\times \HM{\infty}$ to $\HM{\infty}$. 
Eq.\,\rref{basineqa}
is closely related to the basic norm inequalities about
multiplication in Sobolev spaces \cite{Zei}. Eq. \rref{katineqa} was discovered
in \cite{Kat1} for integer $p=n$, and extended in
\cite{CoFo} to the case of noninteger $p=n$; inequalities
very similar to \rref{katineqa} were proposed in \cite{BKM}  \cite{RSS} \cite{Tem} . 
In the sequel, we intend
\beq K_q := K_{q q}\,, \qquad G_q := G_{q q}\,, \label{kq} \feq
(typically, with $q=n$ or $q=p$). For our purposes, it is important to point out that
fully quantitative, upper and lower bounds for the sharp constants $K_{p n}$, $G_{p n}$
in Eqs. \rref{basineqa}\rref{katineqa}) are available (see \cite{coga} and the previous works \cite{cog} \cite{cok} 
by one of us and coworkers). From
here to the end of the paper $K_{p n}, G_{p n}$ are (possibly nonsharp) constants fulfilling the above inequalities.
\saltino
\textbf{The NS and MHD equations in a smooth framework.} Let us fix a viscosity and a resistivity
\beq \nu, \eta \in (0,+\infty)~, \label{asinf} \feq
\vskip -0.1cm \noindent
and specify the notion of solution to be used 
from now on for the above equations and their Cauchy problems. 
To this purpose we consider the equalities
\beq \mbox{(a)} \quad {d u \over d t} = \nu \Delta u + \PPP(u,u)~, \qquad \qquad \mbox{(b)} \quad  u(0) = \uz ~. \label{sol} \feq
From now to the end of the paper, a \emph{solution of the NS equation} means a map
$u \in C^\infty([0, T), \HM{\infty})$, with $T = T_u \in (0, + \infty]$, fulfilling
Eq. (\ref{sol}.a); a \emph{solution of the NS Cauchy problem} with initial datum $\uz \in \HM{\infty}$
is a map $u$ as before, fulfilling Eqs. (\ref{sol}.a) and (\ref{sol}.b). 
\par
We now consider the equalities
\beq
\mbox{(a)} \quad {d u \over d t} =  \nu \Delta u + \PPP(u,u) - \PPP( b, b)~, \quad {d b \over d t} = \eta \Delta b + \PPP(u, b) - \PPP(b,u) ~; 
\qquad  \qquad \mbox{(b)} \quad u(0) = \uz, ~~b(0) = \bz\,.
\label{solsm}
\feq
A \emph{solution of the MHD equations} is a pair formed by two maps
$u, b \in C^\infty([0,T), \HM{\infty})$, with $T = T_{u, b} \in (0, + \infty]$, fulfilling Eq. (\ref{solsm}.a); 
a solution of the \emph{MHD Cauchy problem} with initial 
datum $(\uz, \bz) \in \HM{\infty} \times \HM{\infty}$ is a pair of maps $(u, b)$ as before, fulfilling Eqs.
(\ref{solsm}.a) and (\ref{solsm}.b). \par
In the above, $\PPP$ is the fundamental bilinear map \rref{defpi}; one recognizes that the evolution equations
in (\ref{sol}.a) and (\ref{solsm}.a) correspond, respectively, to those appearing in Eqs. \rref{eqns} and \rref{beq}. 
Let us repeat that a solution $u$ or $(u,b)$ with $T = + \infty$ is said to be \emph{global}. \par
It is known that problem
(\ref{sol}.a) (\ref{sol}.b) (resp., problem (\ref{solsm}.a) (\ref{solsm}.b)) has a unique maximal, i.e., not extendable
solution. 
({\footnote{See the classical references on the NS or MHD Cauchy problems cited in the Introduction.  Most
of these works present technical differences with respect to the setting
considered here (e.g., use of the spatial domain $\reali^d$
instead of $\Td$, and use of Sobolev spaces of finite order in place of our infinite order space \rref{hinf} \rref{cinf}). 
However, one 
converts the results of the cited works to the present setting by rather standard arguments. We refer to 
paper \cite{smooth} by one of us and coworkers
for a detailed discussion of these issues in the NS case (relying basically on \cite{Tem}
for what concerns the $C^\infty$ setting); our work \cite{MHD} gives hints for the analogous
discussion in the MHD case. }})
\saltino
\textbf{Reformulation of the MHD equations; the fundamental MHD bilinear map.}
Maintaing the notations $D'(\Td, \reali^d) \equiv
\mathbb{D}'$ and $L^2(\Td, \reali^d) \equiv \mathbb{L}^2$, let us define
\beq \bDd' := \mathbb{D}'\times \mathbb{D}'~; \feq 
\beq \bL^2 := \mathbb{L}^2 \times \mathbb{L}^2~, 
\quad \la \bvi | \bw \ra_{L^2} := \la \vi | w \ra_{L^2} + \la \bi | \ci \ra_{L^2}\,, 
\quad \| \bvi \|_{L^2} := \sqrt{\la \bvi \vert \bvi\ra _{{L^2}}}
= \sqrt{\| \vi \|^2_{L^2} + \| \bi \|^2_{L^2}} \quad \mbox{for $\bvi = (\vi,\bi)$, $\bw = (w,\ci) \in \bL^2$}~; \feq
\beq
\HD{p} := \HM{p} \times \HM{p}\,, \quad  \la\bvi \vert \bw\ra _{p}:=\la \vi \vert w\ra _p + \la \bi
\vert \ci \ra _p, \quad \| \bvi \|_{{p}}:=\sqrt{\la \bvi \vert \bvi\ra _{{p}}}=\sqrt{
\| \vi \|_p^2 + \| \bi \|^2_p} \quad \mbox{for}\, p \in \reali\,, \bvi=(\vi,\bi), \bw=(w,\ci) \in \HD{p}\,;
\label{inpbn} \feq
of course, $\bL^2$ and $\HD{p}$ are Hilbert spaces with the inner products $\la \, | \, \ra_{L^2}$ and 
$\la \, | \, \ra_{p}$, inducing the norms $\|\,\|_{L^2}$ and  $\|\,\|_{p}$. 
We also set
\beq
\HD{\infty} := \bigcap_{p \, \in \, \reali} \, \HD{p} = \HM{\infty} \times \HM{\infty}~;
\label{hdinf} \feq     
this is a \Fre space with the infinitely many norms
$\|~\|_p$ ($p \in \reali$ or, equivalently, $p = 0,1,2,...$).
In this subsection we will rephrase
the MHD Cauchy problem  (\ref{solsm}.a) (\ref{solsm}.b)
using a linear operator $\bA$ and a bilinear map $\bP$ acting on the above product spaces. 
The first one of these characters is defined by
\beq \bA : \bDd' \rightarrow \bDd'~, \qquad
\bvi := (\vi,\bi) \mapsto \bA \bvi := (\nu \Delta \vi , \eta \Delta \bi )\,.
\label{deba} \feq     
We note that Eq. \rref{lap2} and the related statements imply
the following: for each real $p$ and $\bvi = (\vi,\bi) \in \HD{p+2}$ one has
$\bA \bvi \in \HD{p}$, and
\beq \| \bA \bvi \|_p = \sqrt{ \nu^2 \| \vi \|^2_{p+2} + \eta^2 \| \bi \|^2_{p+2}}
\leqs M \| \bvi \|_{p+2}~, \qquad \la \bA \bvi | \bvi \ra_p = - \nu \| \vi \|^2_{p+1} - \eta \| \bi \|^2_{p+1}
\leqs - \mu \| \bvi \|^2_{p+1}
\leqs - \mu \| \bvi \|^2_p~, \label{ba2} \feq
\beq M := \max(\nu,\eta)\,, \qquad  \mu := \min(\nu,\eta)~. \label{demu} \feq
The first relation \rref{ba2} implies that $\bA$ is continuous
from $\HD{p+2}$ to $\HD{p}$ for each real $p$, and
from $\HD{\infty}$ to $\HD{\infty}$. 
To go on we put
\beq \bP : (\bvi=(\vi,\bi), \bw=(w,\ci)) \mapsto
\bP (\bvi,\bw):=(\P(\vi,w)-\P(\bi,\ci) , \P(\vi,\ci)-\P(\bi,w))~. \label{debp} \feq
$\bP$ is well defined and bilinear, say, on the domain formed
by pairs $(\bvi, \bw)$ as above with $\vi, \bi \in \mathbb{L}^\infty$ and $\partial_s w, \partial_s \ci \in \mathbb{L}^2$
for $s \in \{1,...,d\}$; this domain is mapped by $\bP$ to $\mathbb{L}^2$. For reasons that will soon be clear, 
from now on $\bP$ will be called
the \textsl{fundamental MHD bilinear map}. Due to the properties
of $\P$ reviewed previously, $\bP$
maps continuously $\HD{p} \times \HD{p+1}$ to $\HD{p}$ for
each $p > d/2$, and $\HD{\infty} \times \HD{\infty}$ to $\HD{\infty}$. It was shown in \cite{MHD} (see the Appendix therein) that $\bP$ fulfills
inequalities very similar to \rref{basineqa} \rref{katineqa}. More precisely, if $p, n \in \reali$ are related as below,
there are constants $\Kp_{p n}$, $\Gp_{p n}$ $\in (0,+\infty)$ such that 
\beq 
\| \bP(\bvi, \bw) \|_p \leqs {1 \over 2} \Kp_{p n} ( \| \bvi \|_p \| \bw \|_{n+1} + \| \bvi \|_n \| \bw \|_{p+1})
\qquad \mbox{if $p \geqs n > d/2$, $\bvi \in \HD{p}$\,, $\bw \in \HD{p+1}$}~, \label{basic2np}
\feq
\beq | \la \bP(\bvi, \bw) | \bw \ra_p | \leqs
{1 \over 2} \Gp_{p n} (\| \bvi \|_p \| \bw \|_n + \| \bvi \|_n \| \bw \|_p)\| \bw \|_p
\qquad \mbox{if $p \geqs n > d/2 + 1$, $\bvi \in \HD{p}$\,, $\bw \in \HD{p+1}$}~; \label{katop2np}
\feq  
as proved in \cite{MHD}, one can take
\beq \Kp_{p n} := \sqrt{2} \, K_{p n}~,  \qquad \Gp_{p n} := \sqrt{2} \, G_{p n}~, \label{weknowg} \feq
where $K_{p n}, G_{p n}$ are constants fulfilling \rref{basineqa} \rref{katineqa}. 
In the sequel, we will use 
the notations
\beq \Kp_q := \Kp_{q q}\,, \qquad \Gp_q := \Gp_{q q}\,, \label{kpq} \feq
(typically, with $q=n$ or $q=p$; needless to say, \rref{kq} and \rref{weknowg} 
ensure that we can take $\Kp_{q} := \sqrt{2} \, K_{q}$ and $\Gp_{q} := \sqrt{2} \, G_{q}$). \par
We are now ready to reconsider the solutions of the MHD equations and of the corresponding Cauchy problem. To this purpose, 
let us write the equalities
\beq \mbox{(a)} \quad {d \bu \over d t} = \bA \bu + \bP(\bu,\bu)~, \qquad \qquad \mbox{(b)} \quad \bu(0) = \buz~.
\label{solsmm} \feq
As readily checked, \textsl{a solution of the MHD equations (\ref{solsm}.a) is just
a map  $\bu = (u, b) \in C^\infty([0, T), \HD{\infty})$ (with $T$ $=T_\bu$ $\in (0, +\infty]$) fulfilling
Eq. (\ref{solsmm}.a)}; 
\textsl{a solution of the MHD Cauchy problem (\ref{solsm}.a) (\ref{solsm}.b) with initial datum $\buz = (\uz, \bz) \in \HD{\infty}$ 
is a map $\bu$ as before, 
fulfilling Eqs. (\ref{solsmm}.a) and (\ref{solsmm}.b).} \par
As emphasized in \cite{MHD}, \textsl{the formulation
(\ref{solsmm}.a) of the MHD equations
indicates strong similarities with the NS equation (\ref{sol}.a), on the grounds of the structural
analogies between the linear maps $\nu \Delta $ and $\bA$} (compare Eqs. \rref{lap2} and \rref{ba2} \rref{demu}), \textsl{and between the bilinear maps
$\P$ and $\bP$} (see again Eqs. \rref{basineqa} \rref{katineqa} and \rref{basic2np} \rref{katop2np}).

\section{Approximate solutions of the MHD Cauchy problem. Control inequalities}
\label{secappr}
In \cite{smooth}, 
a general method was developed for the \textsl{a posteriori} analysis of the approximate 
solutions of the NS Cauchy problem (\ref{sol}.a) (\ref{sol}.b); this method, summarized in \cite{glosta}, allows to get estimates on the interval of existence of the exact 
solution, and on the Sobolev distances (of arbitrarily high order) between the exact and the approximate solution, using information provided
exclusively by the approximate solution. In \cite{MHD}, an analogous method was constructed for the MHD Cauchy problem
(\ref{solsmm}.a) (\ref{solsmm}.b), using the structural similarities with the NS case emphasized in the previous section. \parn
Recalling that we have fixed $\nu, \eta >0$, let $\buz = (\uz, \bz) \in \HD{\infty}$; the method of \cite{MHD} can be summarized
via the definition and the proposition that follow.
\vskip -0.2cm \noindent
\begin{prop}
\label{defap}
\textbf{Definition.}
\emph{
An \emph{approximate solution} of the MHD Cauchy problem (\ref{solsmm}.a) (\ref{solsmm}.b)
is any map $\bua = (\ua, \ba) \in C^1([0, \Ta), \HD{\infty})$,
with $\Ta \in (0,+\infty]$. Given such a function,
(i) and (ii) are stipulated}. \par\noindent
i) The \emph{differential error} and the \emph{datum error} of $\bua$ are, respectively:
\beq e(\bua) := {d \bua \over d t} - \bA \bua - \bP(\bua,\bua)~
\in C([0,\Ta), \HD{\infty})~; \quad  \buz - \bua(0) \in \HD{\infty}~. \label{differr} \feq
\vskip -0.1cm \noindent
ii) Let $p \in \reali$. A \emph{differential error estimator},
a \emph{datum error estimator} and a \emph{growth estimator} of order $p$ for $\ua$
are, respectively,
a function $\ep_p \in C([0,\Ta), [0,+\infty))$, a
number $\delta_p \in [0,+\infty)$ and
a function $\Dd_p \in C([0,\Ta), [0,+\infty))$ such that
\beq \| e(\bua)(t) \|_p \leqs \ep_p(t)~\mbox{~~for $t \in [0,\Ta)$}~, \quad
\| \buz - \bua(0) \|_p \leqs \delta_p~, \quad
\| \bua(t) \|_p \leqs \Dd_p(t)~\mbox{~~for $t \in [0,\Ta)$}~.
\label{din} \feq
In particular the function $\ep_p(t) := \| e(\bua)(t) \|_p$,
the number $\delta_p := \| \buz - \bua(0) \|_p$ and
the function $\Dd_p(t) := \| \bua(t) \|_p$ will be called the
\emph{tautological} estimators of order $p$ for the differential error,
the datum error and the growth of $\bua$.
\end{prop}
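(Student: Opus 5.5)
Although the statement is a definition, it contains two implicit claims that need checking: the differential error $e(\bua)$ belongs to $C([0,\Ta), \HD{\infty})$, and the tautological estimators in (ii) really are estimators, i.e.\ they have the required regularity. My plan is to derive both from the continuity properties of $\bA$, $\bP$ and of the norms $\|\,\|_p$ on the \Fre space $\HD{\infty}$ established in Section \ref{secsetting}.

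\textbf{The differential error.} I will treat the three terms of \rref{differr} separately.

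First, since $\bua \in C^1([0,\Ta), \HD{\infty})$, the derivative $d\bua/dt$ lies in $C([0,\Ta), \HD{\infty})$ by definition of $C^1$ with values in a \Fre space.

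Second, $\bA$ is a continuous linear map of $\HD{\infty}$ into itself. This follows from the first relation \rref{ba2}, which gives $\|\bA \bvi\|_p \leqs M \|\bvi\|_{p+2}$ for every $p$. Hence $t \mapsto \bA\bua(t)$ is continuous.

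Third, $\bP$ is continuous and bilinear from $\HD{\infty}\times\HD{\infty}$ to $\HD{\infty}$, as follows from \rref{basic2np} with $n=p$ for every $p > d/2$. The map $t \mapsto (\bua(t),\bua(t))$ is continuous into the product space, so its composition with $\bP$ is continuous. Concretely, for each $p > d/2$ I would use the bilinear splitting
\beq \bP(\bua(t),\bua(t)) - \bP(\bua(s),\bua(s)) = \bP(\bua(t)-\bua(s), \bua(t)) + \bP(\bua(s), \bua(t)-\bua(s)) \nonumber \feq
together with \rref{basic2np}. The case of $p \leqs d/2$ then follows because $\|\,\|_p \leqs \|\,\|_{p'}$ whenever $p \leqs p'$.

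Summing the three terms gives $e(\bua) \in C([0,\Ta),\HD{\infty})$. The datum error $\buz - \bua(0)$ lies in $\HD{\infty}$ trivially, since $\HD{\infty}$ is a vector space.

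\textbf{The tautological estimators.} For each real $p$, the norm $\|\,\|_p$ is continuous on $\HD{\infty}$: it is one of the seminorms defining the topology, or in any case it is dominated by $\|\,\|_q$ for an integer $q \geqs p$. Therefore $t \mapsto \|e(\bua)(t)\|_p$ and $t \mapsto \|\bua(t)\|_p$ belong to $C([0,\Ta),[0,+\infty))$. The inequalities \rref{din} then hold with equality. No step here is a genuine obstacle; the only point that needs care is the continuity of the bilinear term $\bP$, which is handled by the splitting above.
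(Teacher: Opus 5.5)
Your verification is correct and matches what the paper implicitly relies on. The statement is a definition with no separate proof, and its only nontrivial claim, $e(\bua) \in C([0,\Ta), \HD{\infty})$, rests on the continuity of $\bA$ on $\HD{\infty}$ (first relation in \rref{ba2}) and of $\bP$ on $\HD{\infty} \times \HD{\infty}$ (\rref{basic2np} with $n=p$), both recorded in Section~\ref{secsetting}; your bilinear splitting and your remark on the continuity of each $\|\,\|_p$ correctly fill in these routine details.
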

\vskip -0.4cm \noindent
\begin{prop}
\label{main}
\textbf{Proposition.} Let $\bua \in C^1([0,\Ta), \HD{\infty})$ be
an approximate solution of the MHD Cauchy problem (\ref{solsmm}.a) (\ref{solsmm}.b). Assume that,
for some $n \in (d/2+ 1, +\infty)$,
$\bua$ has differential error, datum error and growth
estimators of order $n$ or $n+1$, indicated
with $\ep_n, \delta_n$, $\Dd_n$ and $\Dd_{n+1}$,
and that there is a function $\Rr_n \in C([0,\Tc), \reali)$,
with $\Tc \in (0,\Ta]$, fulfilling the following
\emph{control inequalities}:
\beq {d^{+} \Rr_n \over d t} \geqs - \mu \Rr_n
+ (\Gp_n \Dd_n + \Kp_n \Dd_{n+1}) \Rr_n + \Gp_n \Rr^2_n + \ep_n
~\mbox{everywhere on $[0,\Tc)$}, \quad \Rr_n(0) \geqs \delta_n~
\label{cont1} \feq
\vskip -0.1cm \noindent
($\mu$ as in \rref{demu}, $\Kp_n, \Gp_n$ as in Eqs. \rref{basic2np} \rref{katop2np} \rref{kpq};
in the above we use the right, upper Dini derivative
$(d^{+} \Rr_n(t)/d t)(t) := \limsup_{h \vain 0^{+}}$ $(\Rr_n(t + h) - \Rr_n(t))/h$).
Consider the maximal solution $\bu \in C^\infty([0,T), \HD{\infty})$ of
problem (\ref{solsmm}.a) (\ref{solsmm}.b); then (i)(ii) hold. \parn
i) $\bu$ and its existence time $T$ are such that
\beq T \geqs \Tc~, \qquad
\| \bu(t) - \bua(t) \|_n \leqs \Rr_n(t) \qquad \mbox{for $t \in [0,\Tc)$}~. \label{furth} \feq
\vskip -0.15cm \noindent
In particular, if $\Rr_n$ is global ($\Tc = +\infty)$, then $\bu$ is global as well
($T=+\infty$). \parn
ii) Consider any $p \in (n, +\infty)$, and let $\ep_p$, $\delta_p$,
$\Dd_p, \Dd_{p+1}$ be differential error, datum error
and growth estimators of order $p$ or $p+1$ for $\bua$.
Let $\Rr_p \in C([0,\Tc),\reali)$ be a function fulfilling
the \emph{linear, order p control inequalities}
\beq {d^{+} \Rr_p \over d t} \geqs - \mu \Rr_p
+ (\Gp_p \Dd_p + \Kp_p \Dd_{p+1} + \Gp_{p n} \Rr_n) \Rr_p + \ep_p
~\mbox{everywhere on $[0,\Tc)$}~, \qquad
\Rr_p(0) \geqs \delta_p \label{cont2p} \feq
\vskip -0.1cm \noindent
($\Gp_{p n}$, $\Kp_p, \Gp_p$ as in Eqs. \rref{basic2np} \rref{katop2np} \rref{kpq};
again, $d^{+}/d t$ stands for the right, upper Dini derivative). Then
\beq \| \bu(t) - \bua(t) \|_p \leqs \Rr_p(t) \qquad \mbox{for
$t \in [0,\Tc)$}~. \label{urp} \feq
The relations \rref{cont2p} are both fulfilled as equalities by a unique
function $\Rr_p \in C^1([0,\Tc),\reali)$, which
is given explicitly by
\beq \Rr_p(t) = e^{\dd{-\mu t + \Aa_p(t)}} \Big(\delta_p
+ \int_{0}^t d s \, e^{\dd{\,\mu s -\Aa_p(s)}} \ep_p(s) \Big)~,
\qquad \Aa_p(t) := \int_{0}^ t d s \,
\big(\Gp_p \Dd_p(s) + \Kp_p \Dd_{p+1}(s) + \Gp_{p n} \Rr_n(s)\big)~. \label{rp} \feq
\end{prop}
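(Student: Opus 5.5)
The plan is to study the difference $\bw := \bu - \bua$ on the common interval $[0, \min(T,\Tc))$ and compare its Sobolev norms with $\Rr_n$, $\Rr_p$ by a Dini-derivative comparison argument. Subtracting the equation for $\bua$ (i.e.\ the definition \rref{differr} of $e(\bua)$) from (\ref{solsmm}.a) and using bilinearity, $\bw$ satisfies
\beq {d \bw \over d t} = \bA \bw + \bP(\bua,\bw) + \bP(\bw,\bua) + \bP(\bw,\bw) - e(\bua)\,, \qquad \bw(0) = \buz - \bua(0)\,. \nonumber \feq
For any real $q \geqs n$ we take the $\la\,|\,\ra_q$ inner product with $\bw$, giving $\tfrac12 d\|\bw\|^2_q/dt = \la \bA\bw|\bw\ra_q + \ldots$. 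Then we bound each term. By \rref{ba2}, $\la \bA \bw | \bw\ra_q \leqs -\mu \|\bw\|_q^2$. By \rref{katop2np} with $p=n=q$, $|\la \bP(\bua,\bw)|\bw\ra_q| \leqs \Gp_q \|\bua\|_q \|\bw\|^2_q$, and $|\la \bP(\bw,\bw)|\bw\ra_q| \leqs \tfrac12\Gp_{qn}(\|\bw\|_q\|\bw\|_n+\|\bw\|_n\|\bw\|_q)\|\bw\|_q = \Gp_{qn}\|\bw\|_n\|\bw\|_q^2$. By Cauchy--Schwarz and \rref{basic2np} with $p=n=q$, $|\la \bP(\bw,\bua)|\bw\ra_q| \leqs \Kp_q \|\bw\|_q\|\bua\|_{q+1}\|\bw\|_q$; here we use $\|\bw\|_q\|\bua\|_{q+1}+\|\bw\|_q\|\bua\|_{q+1}$ after noting $\|\,\|_q\leqs\|\,\|_{q+1}$. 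Finally, the error term is bounded by $\ep_q\|\bw\|_q$. Dividing by $\|\bw\|_q$ (in Dini form, to handle zeros) yields
\beq {d^+ \|\bw\|_q \over dt} \leqs -\mu\|\bw\|_q + (\Gp_q\Dd_q + \Kp_q \Dd_{q+1} + \Gp_{qn}\|\bw\|_n)\|\bw\|_q + \ep_q\,. \nonumber \feq

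For $q=n$ this is exactly the Riccati inequality \rref{cont1} with $\le$ in place of $\ge$, since $\Gp_{nn}=\Gp_n$. We then apply a standard comparison lemma for Dini derivatives: if $\|\bw(0)\|_n \leqs \delta_n \leqs \Rr_n(0)$, the sub-solution stays below the super-solution $\Rr_n$ on $[0,\min(T,\Tc))$. It is cleanest to compare with $\Rr_n+\epsilon$, which satisfies \rref{cont1} strictly (as the right side is increasing in $\Rr_n\geqs 0$), and to argue at the first touching time; then let $\epsilon\to0$. To show $T\geqs\Tc$, we argue by contradiction. If $T<\Tc$, then $\|\bu(t)\|_n \leqs \Dd_n(t)+\Rr_n(t)$ stays bounded on $[0,T)$, because $\Rr_n$ is continuous on the compact $[0,T]$. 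This contradicts the standard blow-up criterion for maximal MHD solutions: boundedness of the $H^n$ norm with $n>d/2+1$ prevents blow-up, via the same energy estimate at higher orders $p$, which becomes linear. This step, invoking the continuation criterion in the $C^\infty$ setting, is the main obstacle; we would rely on the Cauchy theory referenced for \cite{MHD}. The approach is to prove that $\|\bu\|_p$ stays bounded for every $p$ by the linear inequality, which gives extendability.

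For (ii), with $q=p$ the inequality above is linear in $\|\bw\|_p$, with coefficient involving $\|\bw\|_n\leqs\Rr_n$ from (i). Hence $\|\bw\|_p$ is a sub-solution of \rref{cont2p}, and the same comparison gives \rref{urp}. Finally, \rref{rp} follows by solving the linear ODE $\Rr_p' = (-\mu + \Gp_p\Dd_p+\Kp_p\Dd_{p+1}+\Gp_{pn}\Rr_n)\Rr_p + \ep_p$ with an integrating factor. Uniqueness is standard for linear ODEs with continuous coefficients.
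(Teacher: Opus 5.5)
Your route is the standard one behind this proposition, which the paper quotes from \cite{MHD} (modelled on \cite{smooth}) without reproducing a proof. It consists of four steps. First, the equation for $\bw=\bu-\bua$. Second, the $\HD{q}$ energy estimate via \rref{ba2}, \rref{basic2np}, \rref{katop2np}, giving $d^{+}\|\bw\|_q/dt \leq (-\mu+\Gp_q\Dd_q+\Kp_q\Dd_{q+1}+\Gp_{qn}\|\bw\|_n)\|\bw\|_q+\ep_q$; your term-by-term bounds and your handling of the zeros of $\bw$ are correct. Third, a Dini-derivative comparison with $\Rr_n$ and then with $\Rr_p$. Fourth, the blow-up criterion ($\|\bu(t)\|_n\to+\infty$ as $t\to T^-$ if $T<+\infty$, for $n>d/2+1$) to get $T\geq\Tc$. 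That criterion is simply taken from the Cauchy theory, so it is not really an obstacle.

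The one step that is wrong as written is your justification of the comparison. Put $F(t,r):=(-\mu+\Gp_n\Dd_n(t)+\Kp_n\Dd_{n+1}(t))\,r+\Gp_n r^2+\ep_n(t)$. From $d^{+}\Rr_n/dt\geq F(t,\Rr_n)$ you get $d^{+}(\Rr_n+\epsilon)/dt>F(t,\Rr_n+\epsilon)$ only if $F(t,\cdot)$ is strictly \emph{decreasing}. If it were increasing, as you claim, then $F(t,\Rr_n+\epsilon)\geq F(t,\Rr_n)$ and nothing follows; in fact $F(t,\cdot)$ is not monotone at all, because of the term $-\mu r$. Moreover, your inequalities control only right Dini derivatives, so a ``first touching time'' (where only left difference quotients are controlled) yields no contradiction. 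The argument must be run at the last zero of $\|\bw\|_n-\Rr_n$ before a point where it is positive.

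A clean repair uses the factorization $F(t,r)-F(t,s)=\beta(t)(r-s)$. Let $\phi:=\|\bw\|_n-\Rr_n$ and $\beta:=-\mu+\Gp_n\Dd_n+\Kp_n\Dd_{n+1}+\Gp_n(\|\bw\|_n+\Rr_n)$, which is continuous on $[0,\min(T,\Tc))$. Since $\|\bw\|_n$ has a right derivative everywhere, the lower right Dini derivative satisfies $D_{+}\phi\leq F(t,\|\bw\|_n)-F(t,\Rr_n)=\beta\phi$. Hence $e^{-\int_0^t\beta}\phi$ is a continuous function with nonpositive lower right Dini derivative, so it is nonincreasing, and $\phi(t)\leq e^{\int_0^t\beta}\phi(0)\leq 0$. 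Alternatively, compare with $\Rr_n+\epsilon e^{2Lt}$, where $L$ is a Lipschitz constant of $F$ on a compact set containing the relevant values, and let $\epsilon\to 0$. The same argument with the linear coefficient $\beta_p:=-\mu+\Gp_p\Dd_p+\Kp_p\Dd_{p+1}+\Gp_{pn}\Rr_n$ gives \rref{urp}. With this fix your proof is complete.
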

\par
In \cite{MHD} we presented some applications of 
of Proposition \ref{main}; in the most important, the approximate solution $\bua$ was provided 
by the Galerkin method (i.e., by truncation of problem (\ref{solsmm}.a) (\ref{solsmm}.b) to a set
of finitely many Fouries modes). In the present paper we propose a novel application
of Proposition \ref{main}, concerning the global stabiliy of the decaying solutions of
the MHD Cauchy problem. 
\section{Global, decaying MHD solutions}
\label{glodec}     
\noindent
We recall once more that $\nu, \eta > 0$ and $\mu$, $\Gp_n$ are defined by Eqs. \rref{demu} \rref{katop2np} \rref{kpq}.
\begin{prop}
\label{lemzero}
\textbf{Lemma.} Let $\bwz \in \HD{\infty}$ be such that  $\| \bwz \|_n < \mu/\Gp_n$ for
some $n \in (d/2 + 1, +\infty)$. Then, the maximal solution $\bw$ of the MHD Cauchy problem
(\ref{solsmm}.a) (\ref{solsmm}.b) with initial datum $\bwz$ is global and, for each $p \in \reali$, there
is a constant $C_p \in [0,+\infty)$ such that $\|\bw(t) \|_p \leqs C_p e^{-\mu t}$ for all $t \in [0,+ \infty)$.
\end{prop}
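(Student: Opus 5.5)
Apply Proposition \ref{main} with the trivial approximate solution $\bua := 0$ on $[0,+\infty)$. Its differential error vanishes, since $e(0) = 0 - \bA 0 - \bP(0,0) = 0$, so we may take $\ep_p := 0$ for every $p$. Its datum error is $\bwz$, so $\delta_p := \| \bwz \|_p$. The growth estimators are $\Dd_p := 0$.

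The order $n$ control inequality \rref{cont1} then reduces to $d^{+}\Rr_n/dt \geqs -\mu \Rr_n + \Gp_n \Rr_n^2$ with $\Rr_n(0) \geqs \| \bwz\|_n$. Take the equality case with $\Rr_n(0) = \delta_n := \|\bwz\|_n$. This Bernoulli equation has the explicit solution
\[
\Rr_n(t) = \frac{\mu\, \delta_n e^{-\mu t}}{\mu - \Gp_n \delta_n (1 - e^{-\mu t})}\,.
\]
The hypothesis $\delta_n < \mu/\Gp_n$ is used here. It makes the denominator at least $\mu - \Gp_n\delta_n > 0$ for all $t \geqs 0$, so $\Rr_n$ is global ($\Tc = +\infty$) and
\[
\Rr_n(t) \leqs \frac{\mu \delta_n}{\mu - \Gp_n \delta_n}\, e^{-\mu t}\,.
\]
Proposition \ref{main}(i) then gives that $\bw$ is global and that $\|\bw(t)\|_n \leqs \Rr_n(t) \leqs C_n e^{-\mu t}$.

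For $p > n$, use Proposition \ref{main}(ii). With $\ep_p = 0$ and $\Dd_p = \Dd_{p+1} = 0$, formula \rref{rp} gives
\[
\Rr_p(t) = \| \bwz\|_p \, e^{-\mu t + \Aa_p(t)}, \qquad \Aa_p(t) = \Gp_{pn}\int_0^t \Rr_n(s)\, ds\,.
\]
Since $\Rr_n$ decays exponentially, $\Aa_p(t) \leqs \Gp_{pn}\, \mu\delta_n/(\mu(\mu-\Gp_n\delta_n)) < +\infty$ uniformly in $t$; explicitly, the integral is a logarithm. Hence $\|\bw(t)\|_p \leqs C_p e^{-\mu t}$ with $C_p := \|\bwz\|_p \exp\big(\Gp_{pn}\delta_n/(\mu-\Gp_n\delta_n)\big)$.

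For $p < n$, use $\|\cdot\|_p \leqs \|\cdot\|_n$ (valid because the Fourier modes have $|k| \geqs 1$) and take $C_p := C_n$.

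The only delicate step is checking that the Riccati solution does not blow up. That is exactly where the smallness condition $\| \bwz \|_n < \mu/\Gp_n$ enters, and once it holds everything else is explicit integration.
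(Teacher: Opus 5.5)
Your proof is correct and follows essentially the route behind the paper's one-line citation of Proposition 5.2 of the MHD reference: you apply Proposition \ref{main} with the zero approximate solution $\bua = 0$, solve the Riccati control equation explicitly, and use $\| \bwz \|_n < \mu/\Gp_n$ to keep $\Rr_n$ global and exponentially decaying; for $p = n$ your constant coincides with the paper's footnote. For $p > n$ the footnote integrates $\Rr_n$ exactly, $\int_0^{+\infty} \Rr_n = -\Gp_n^{-1}\log(1 - \Gp_n \delta_n/\mu)$, which gives the slightly sharper $C_p = \| \bwz \|_p (1 - \Gp_n \delta_n/\mu)^{-\Gp_{pn}/\Gp_n}$ in place of your $\| \bwz \|_p \exp\big(\Gp_{pn}\delta_n/(\mu - \Gp_n\delta_n)\big)$, which is equally valid for the statement.
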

\vskip -0.2cm \noindent
\textbf{Proof.} This is just a simplified formulation of Proposition 5.2 in \cite{MHD}.
({\footnote{The cited Proposition from \cite{MHD} gives slightly more refined estimates
on the instantaneous Sobolev norms of the solution, which can  be used to infer explicit expressions
for the constants $C_p$. More precisely, with $n$ as above, Eqs. (5.9) (5.11) (5.12) in \cite{MHD} ensure
that we can take
$C_p = \| \bwz \|_p (1 - \Gp_n \| \bwz \|_n/\mu)^{-\Gp_{p n}/\Gp_n}$ for $n$ as above and $p \geqs n$;
of course for $p < n$ we can take $C_p = C_n$, since $\| \bw(t) \|_p \leqs \| \bw(t) \|_n$.
}}) \fine
\vskip 0.05cm \noindent
Using the previous Lemma, one can prove the equivalence of several decay conditions
for a global MHD solution, as described hereafter:
\vskip -0.25cm \noindent
\begin{prop}
\label{lemdec}
\textbf{Proposition.} Let $\bvi \in C^\infty([0,+\infty), \HD{\infty})$ be a
global solution of the MHD equations (\ref{solsmm}.a).
The following statements are equivalent:
\parn
a)  For some $n \in (d/2+1,+\infty)$ and $t_0 \in [0,+\infty)$, it is $\| \bvi(t_0)\|_n < \mu/\Gp_n$. \parn
b) For some $n \in (d/2+1,+\infty)$, it is $\| \bvi(t)\|_n  \to 0$ for $t \to +\infty$. \parn
c) For all $p \in \reali$, it is $\| \bvi(t)\|_p \to 0$ for $t \to +\infty$ (i.e., $\bvi(t) \to 0$ in the \Fre space
$\HD{\infty}$). \parn
d) For some $n \in (d/2+1,+\infty)$ and $\gamma \in (0,+\infty)$, it is $\int_{0}^{+\infty} d t\, \|\bvi(t)\|^\gamma_n < + \infty$. \parn
e) For all $p \in \reali$ and $\gamma \in (0,+\infty)$, it is $\int_{0}^{+\infty} d t\, \|\bvi(t)\|^\gamma_p < + \infty$. \parn
f) For some $n \in (d/2+1,+\infty)$, there is constant $C_n \in [0,+\infty)$
such that $\| \bvi(t) \|_n \leqs C_n \, e^{-\mu t}$ for all $t \in [0,+\infty)$. \parn
g) For each $p \in \reali$, there is constant $C_p \in [0,+\infty)$
such that $\| \bvi(t) \|_p \leqs C_p \, e^{-\mu t}$ for all $t \in [0,+\infty)$. \parn
\end{prop}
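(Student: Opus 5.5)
We prove the equivalences through a cycle of implications. Every statement will be routed through (a), and the key tool is Lemma \ref{lemzero} together with time-translation invariance: if $\bvi$ is a global solution of (\ref{solsmm}.a), then so is $t \mapsto \bvi(t_0 + t)$, with initial datum $\bvi(t_0)$.

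\textbf{The easy implications.} These follow directly from the definitions or from elementary facts.
\begin{itemize}
\item (g) $\Rightarrow$ (f), (g) $\Rightarrow$ (c) and (g) $\Rightarrow$ (e) are immediate. For (e), note that $\int_0^{+\infty} C_p^\gamma e^{-\gamma\mu t}\, dt < +\infty$.
\item (c) $\Rightarrow$ (b) and (e) $\Rightarrow$ (d) are trivial; just pick any $n > d/2+1$.
\item (f) $\Rightarrow$ (b) is obvious.
\item (b) $\Rightarrow$ (a): since $\|\bvi(t)\|_n \to 0$, some $t_0$ satisfies $\|\bvi(t_0)\|_n < \mu/\Gp_n$.
\item (d) $\Rightarrow$ (a): the function $t \mapsto \|\bvi(t)\|_n^\gamma$ is continuous and integrable on $[0,+\infty)$, so its infimum is $0$. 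Otherwise it would be bounded below by a positive constant and the integral would diverge. Hence some $t_0$ has $\|\bvi(t_0)\|_n < \mu/\Gp_n$.
\end{itemize}
This leaves only (a) $\Rightarrow$ (g).

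\textbf{The main implication (a) $\Rightarrow$ (g).} Fix $n, t_0$ as in (a) and set $\bwz := \bvi(t_0) \in \HD{\infty}$. Let $\bw$ be the maximal solution with datum $\bwz$.

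By uniqueness of the maximal solution of the Cauchy problem, $\bw(t) = \bvi(t_0 + t)$ on the common domain. Lemma \ref{lemzero} shows that $\bw$ is global, so this identity holds for all $t \geqs 0$. The lemma also gives, for each $p \in \reali$, a constant $C'_p$ with $\|\bvi(t_0+t)\|_p \leqs C'_p e^{-\mu t}$. Equivalently,
\[
\|\bvi(t)\|_p \leqs C'_p e^{\mu t_0} e^{-\mu t} \quad \text{for } t \geqs t_0.
\]

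On the compact interval $[0, t_0]$, the map $t \mapsto \|\bvi(t)\|_p$ is continuous, because $\bvi \in C^\infty([0,+\infty), \HD{\infty})$. It is therefore bounded by some $B_p$, which gives $\|\bvi(t)\|_p \leqs B_p e^{\mu t_0} e^{-\mu t}$ there. Taking
\[
C_p := \max(C'_p, B_p)\, e^{\mu t_0}
\]
yields (g).

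\textbf{Where care is needed.} The only delicate point is the identification of the translated solution with the maximal solution given by Lemma \ref{lemzero}. This rests on uniqueness for the Cauchy problem (\ref{solsmm}.a)(\ref{solsmm}.b) and on the autonomy of the equation. Both are standard in the setting recalled in Section \ref{secsetting}, so I expect no real obstacle. The cycle (g) $\Rightarrow$ \{c, e, f\} $\Rightarrow$ \{b, d\} $\Rightarrow$ (a) $\Rightarrow$ (g) then closes the proof.
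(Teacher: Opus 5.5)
Your proof is correct and follows essentially the route the paper intends (it defers to the analogous NS argument): every condition is reduced to (a) by elementary arguments. The main implication (a) $\Rightarrow$ (g) then comes from applying Lemma \ref{lemzero} to the time-translated solution $t \mapsto \bvi(t_0+t)$, with continuity on the compact interval $[0,t_0]$ absorbed into the constant $C_p$.
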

\vskip -0.1cm \noindent
\textbf{Proof.} It is essentially identical to the proof of an analogous statement
concerning the NS equation, namely, Proposition 4.2 in \cite{glosta}. \fine 
\vskip -0.2cm \noindent
\begin{prop}
\label{defdec}
\textbf{Definition.} i) A \emph{global, decaying
MHD solution} is a global solution $\bvi$ 
of the MHD equations (\ref{solsmm}.a)
with the equivalent properties (a)-(g) of Proposition \ref{lemdec}. \parn
ii) We say that $\bviz \in \HD{\infty}$ \emph{gives rise to a global, decaying
solution for the MHD Cauchy problem} if such features are possessed
by the maximal solution $\bvi$ of problem (\ref{solsmm}.a) (\ref{solsmm}.b) with initial datum $\bviz$. \parn
iii) The subset of $\HD{\infty}$ formed by
the initial data $\bviz$ as in (ii) will be indicated
with $\EDnueta{\infty} \equiv \ED{\infty}$. 
\end{prop}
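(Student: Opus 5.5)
Since the statement is a definition, there is no theorem to prove. What I would check is that each of its three parts is well posed, using only results already stated.

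For part (i), the definition refers to ``the equivalent properties (a)-(g) of Proposition \ref{lemdec}''. That phrase is meaningful only because Proposition \ref{lemdec} asserts their equivalence for any global solution $\bvi \in C^\infty([0,+\infty), \HD{\infty})$ of (\ref{solsmm}.a). So the first step is to note two consequences.
\begin{itemize}
\item The class of global, decaying MHD solutions does not depend on which of (a)-(g) is used as the defining condition.
\item In the conditions involving ``some $n \in (d/2+1,+\infty)$'', the class does not depend on the choice of $n$. For example, (b) for one $n$ gives (c), and hence (b) for every admissible $n$.
\end{itemize}
In practice I would take (a) as the operative condition, since it is checkable at a single time, and (g) as the operative consequence, since it is the exponential decay used later.

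For part (ii), the point is that ``the maximal solution $\bvi$ of problem (\ref{solsmm}.a) (\ref{solsmm}.b) with initial datum $\bviz$'' is uniquely determined by $\bviz \in \HD{\infty}$. This is the existence and uniqueness of the maximal solution recalled in Section \ref{secsetting}. Hence the property in (ii) is an unambiguous property of $\bviz$: the maximal solution has $T = +\infty$ and satisfies (a)-(g). Consequently the subset $\ED{\infty} \subset \HD{\infty}$ in (iii) is well defined, and the notation $\EDnueta{\infty}$ correctly records that it depends on the fixed $\nu, \eta > 0$ through (\ref{solsmm}.a) and through $\mu$.

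Finally, I would check that $\ED{\infty}$ is nonempty and describe some of its elements, so that the definition has content.
\begin{itemize}
\item By Lemma \ref{lemzero}, every $\bwz$ with $\| \bwz \|_n < \mu/\Gp_n$ for some $n > d/2+1$ lies in $\ED{\infty}$. In particular $0 \in \ED{\infty}$, and $\ED{\infty}$ contains an $\HD{n}$-ball around $0$ intersected with $\HD{\infty}$.
\item The set is invariant under the flow. If $\bviz \in \ED{\infty}$ and $\bvi$ is its solution, then $\bvi(t_0) \in \ED{\infty}$ for every $t_0 \geqs 0$, because the maximal solution starting at $\bvi(t_0)$ is $t \mapsto \bvi(t+t_0)$ by uniqueness, and it inherits property (g).
\item Conversely, by (a) every element of $\ED{\infty}$ flows in finite time into the small-data region of Lemma \ref{lemzero}.
\end{itemize}
None of these steps is a real obstacle. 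The only subtle point is the independence from $n$ in part (i), which rests entirely on Proposition \ref{lemdec}.
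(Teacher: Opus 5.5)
Your proposal is correct and matches how the paper treats this statement: it is a definition with no proof, and its meaningfulness rests on the equivalence in Proposition \ref{lemdec} and on the uniqueness of the maximal solution recalled in Section \ref{secsetting}, while your nonemptiness observation via Lemma \ref{lemzero} is exactly Remark \ref{remcit} (i). Your flow-invariance remark is an extra that the paper does not state, but your argument for it (autonomy of the equations plus uniqueness, with property (g) inherited by the time-shifted solution) is sound.
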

\vskip -0.5cm \noindent
\begin{rema}
\label{remcit}
\textbf{Remarks.}
i) Lemma \ref{lemzero} indicates that, for any $n \in (d/2 +1, +\infty)$, $\ED{\infty}$
contains the ball $\{\bwz \in \HD{\infty}\,\, | \,\, \| \bwz \|_n < \mu/\Gp_n \}$. \parn
ii) In Section \ref{belt}, we will give examples of arbitrarily large initial data $\bviz \in \ED{\infty}$. \parn
iii) We already mentioned that, due to the Sobolev inequalities, the family of norms
$\| \, \|_p$ ($p \in \reali$) on $\HM{\infty}$ is equivalent to 
the family of norms $\| \, \|_{{\displaystyle{C}}^{\displaystyle{r}}}$ ($r \in \naturali$): see the comments after Eq. \rref{cinf}. 
Similarly, the the family of norms
$\| \, \|_p$ ($p \in \reali$) on $\HD{\infty}$ is equivalent to 
the family of norms $\| \, \|_{{\displaystyle{C}}^{\displaystyle{r}}}$ ($r \in \naturali$), where
$\| \bvi \|_{{\displaystyle{C}}^{\displaystyle{r}}} := \max( \| \vi \|_{{\displaystyle{C}}^{\displaystyle{r}}}, 
\| c \|_{{\displaystyle{C}}^{\displaystyle{r}}} )$ for $\bvi = (\vi, c)$.
That said, let us consider a global solution $\bvi$ 
of the MHD equations (\ref{solsmm}.a). Then, conditions (a)-(g) in Proposition \ref{lemdec}, are equivalent to anyone of the following two conditions: \parn
\hskip 0.2cm d') For some $r \in \naturali$ with $r > d/2+1$ and some $\gamma \in (0,+\infty)$, it is
$\int_{0}^{+\infty} d t\, \|\bvi(t)\|^\gamma_{{\displaystyle{C}}^{\displaystyle{r}}} < + \infty$.
\vskip -0.15cm \noindent
\hskip 0.2cm e') For all $r \in \naturali$ and $\gamma \in (0,+\infty)$, it is 
$\int_{0}^{+\infty} d t\, \|\bvi(t)\|^\gamma_{{\displaystyle{C}}^{\displaystyle{r}}} < + \infty$. 
\parn
The claimed equivalences follow from the inequalities relating the norms $\| \, \|_p$ and $\| \, \|_{{\displaystyle{C}}^{\displaystyle{r}}}$: in any case, see Remark 4.4 (iii)
in \cite{glosta} for the verification of a very similar statement, concerning the global solutions of the NS equation (\ref{sol}.a). \parn
iv) The previous Remark (iii) is useful for comparison with the very recent paper \cite{Car}, mentioned in the Introduction as a  
reference of special interest on the global stability of the MHD equations. Here 
the authors consider the global, smooth, mean zero solutions $\bvi$ of the MHD equations on $\Tt$ with the property that, in our notations,
reads $\int_{0}^{+\infty} d t\, \|\bvi(t)\|^2_{{\displaystyle{C}}^{\displaystyle{r}}} < + \infty$ for some $r \in \naturali$. On the grounds of Remark (iii), 
this requirement is equivalent to conditions (a)-(g) in Proposition \ref{lemdec} for $r \geqs 3$, and is implied
by such conditions for $r=1,2$.
\end{rema}
\vskip -0.4cm \noindent
\section{A global stability result for the MHD Cauchy problem}
\label{mainglo} 
\noindent
We repeat again that $\nu, \eta > 0$ and $\mu, \Gp_{p n}, \Gp_n, \Gp_p, \Kp_n, \Kp_p$ are defined by Eqs. \rref{demu} 
\rref{basic2np} \rref{katop2np} \rref{kpq}. The forthcoming statement is the main result of the present paper.
\begin{prop}
\label{proron}
\textbf{Theorem.} Let $\bviz \in \ED{\infty}$
(see Definition \ref{defdec}); denote
with $\bvi \in C^\infty([0,+\infty), \HD{\infty})$ the global, decaying MHD solution
with initial datum $\bviz$. For each $p \in \reali$, let $J_p$ denote any nonnegative real number such that 
\beq \int_{0}^{+\infty} \! \! d t \, \| \bvi(t) \|_p \leqs J_p~; \label{dejn} \feq
in addition, choose any $n \in (d/2 +1,+\infty)$ and define
\beq \rho_n := \dd{\mu \over \Gp_n} e^{\dd{- \Gp_n J_n - \Kp_n J_{n+1}}}~. \label{ron} \feq
\vskip -0.15cm \noindent
Then
\beq \buz \in \HD{\infty}~,~ \| \buz - \bviz \|_n < \rho_n~~\Longrightarrow~~
\buz \in \ED{\infty}~. \label{ball} \feq
If $\buz \in \HD{\infty}$, $\| \buz - \bviz \|_n < \rho_n$
and $\bu$ is the global, decaying
MHD solution with datum $\buz$, for all $t \in [0,+\infty)$ we have:
\vskip 0.1 cm \noindent
\vbox{
\beq ~ \hspace{-0.2cm }\| \bu(t) - \bvi(t) \|_n \leqs
{e^{\dd{ \Gp_n J_n + \Kp_{n} J_{n+1} }} \over 1 - \delta_n/\rho_n} \, \delta_n \, e^{\dd{-\mu t}}, \qquad \delta_n := \| \buz - \bviz \|_n \,;
\label{boundn} \feq
$$ ~\| \bu(t) - \bvi(t) \|_p \leqs
e^{\dd{ \Gp_p J_p + \Kp_{p} J_{p+1} + {\Gp_{p n} \, \delta_n/\rho_n \over \Gp_n(1 - \delta_n/\rho_n)} }} \, \delta_p \, e^{\dd{-\mu t}}
\quad  \mbox{for $p \in (n, + \infty)$},
\qquad \delta_p := \| \buz - \bviz \|_p \,. $$
}
\vskip 0.00 cm \noindent
Under the stronger assumptions $\buz \in \HD{\infty}$ and $\delta_n \equiv \| \buz - \bviz \|_n \leqs \rho_n/2$\,, the bounds \rref{boundn}
imply these simpler bounds, with linear dependence on both variables $\delta_n$ and $\delta_p$:  for all $t \in [0,+\infty)$,
\vskip -0.05 cm \noindent
\beq \| \bu(t) - \bvi(t) \|_n \leqs 2 \,  e^{\dd{ \Gp_n J_n + \Kp_{n} J_{n+1} }} \, \delta_n \, e^{\dd{-\mu t}}  ~~, \qquad
\| \bu(t) - \bvi(t) \|_p \leqs
e^{\dd{ \Gp_p J_p + \Kp_{p} J_{p+1} + {\Gp_{p n} \over \Gp_n} }} \, \delta_p \, e^{\dd{-\mu t}} \quad \mbox{for $p \in (n, + \infty)$}.
\label{boundnhalf} \feq    
\end{prop}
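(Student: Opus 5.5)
We regard the global decaying solution $\bvi$ as an approximate solution $\bua := \bvi$ of the MHD Cauchy problem with datum $\buz$, and apply Proposition \ref{main} with $\Ta = +\infty$. Since $\bvi$ solves (\ref{solsmm}.a) exactly, its differential error vanishes, so we take $\ep_n = \ep_p = 0$. The datum error is $\buz - \bviz$, and we take the tautological estimators $\delta_n = \| \buz - \bviz \|_n$ and $\delta_p = \| \buz - \bviz \|_p$. For the growth estimators we also use the tautological choices $\Dd_q(t) := \| \bvi(t) \|_q$ for $q = n, n+1, p, p+1$. By Proposition \ref{lemdec}(e), these are integrable on $[0,+\infty)$, with integrals bounded by $J_q$.

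The order $n$ control inequality (\ref{cont1}) then reads $\Rr_n' = (-\mu + a(t)) \Rr_n + \Gp_n \Rr_n^2$ with $\Rr_n(0) = \delta_n$, where $a := \Gp_n \Dd_n + \Kp_n \Dd_{n+1}$. This is a Bernoulli equation. Set $A(t) := \int_0^t a$, so that $A(t) \leqs \Gp_n J_n + \Kp_n J_{n+1}$. The substitution $z = 1/\Rr_n$ gives the explicit solution
$$\Rr_n(t) = \frac{\delta_n e^{-\mu t + A(t)}}{1 - \Gp_n \delta_n \int_0^t e^{-\mu s + A(s)} ds}.$$
The denominator is bounded below by $1 - (\Gp_n \delta_n/\mu) e^{\Gp_n J_n + \Kp_n J_{n+1}} = 1 - \delta_n/\rho_n > 0$, using $\int_0^\infty e^{-\mu s} ds = 1/\mu$ and the definition (\ref{ron}) of $\rho_n$. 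Hence $\Rr_n$ is global and positive (the case $\delta_n = 0$ is trivial). Proposition \ref{main}(i) then gives that $\bu$ is global, together with the first bound in (\ref{boundn}). Decay follows from $\| \bu(t) \|_n \leqs \| \bvi(t) \|_n + \Rr_n(t) \to 0$, so $\buz \in \ED{\infty}$ by Proposition \ref{lemdec}(b). This proves (\ref{ball}).

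For $p > n$, we use the explicit formula (\ref{rp}) with $\ep_p = 0$, which gives $\Rr_p(t) = \delta_p e^{-\mu t + \Aa_p(t)}$. We bound $\Aa_p(t) \leqs \Gp_p J_p + \Kp_p J_{p+1} + \Gp_{p n} \int_0^\infty \Rr_n$. The step that needs care is computing $\int_0^\infty \Rr_n$ sharply rather than estimating it crudely. Observe that the numerator of $\Rr_n$ is, up to the factor $\Gp_n \delta_n$, the derivative of the integral in the denominator. So $\Gp_n \int_0^t \Rr_n = -\log\bigl(1 - \Gp_n \delta_n \int_0^t e^{-\mu s + A(s)} ds\bigr) \leqs -\log(1 - \delta_n/\rho_n)$.

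This yields a factor $(1 - \delta_n/\rho_n)^{-\Gp_{p n}/\Gp_n}$. To match the stated exponent $\frac{\delta_n/\rho_n}{1 - \delta_n/\rho_n}$, we use the elementary inequality $-\log(1-x) \leqs x/(1-x)$ for $x \in [0,1)$. Proposition \ref{main}(ii) then gives the second bound in (\ref{boundn}).

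Finally, (\ref{boundnhalf}) follows from (\ref{boundn}) by substituting $\delta_n/\rho_n \leqs 1/2$. In the order $n$ bound this gives $1/(1 - \delta_n/\rho_n) \leqs 2$, and in the order $p$ bound the exponent $\frac{x}{1-x} \leqs 1$ for $x \leqs 1/2$.
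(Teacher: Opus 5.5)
Your proposal is correct and follows the paper's proof essentially step by step: you take $\bua := \bvi$ with the tautological estimators, solve the order $n$ control equation explicitly with its denominator bounded below by $1-\delta_n/\rho_n$, and use the explicit formula for $\Rr_p$ when $p>n$. The differences are minor. You compute $\Gp_n\int_0^t \Rr_n = -\log(1-\Gp_n\delta_n \LL_n(t))$ exactly, which gives the slightly sharper factor $(1-\delta_n/\rho_n)^{-\Gp_{p n}/\Gp_n}$ before you relax it, whereas the paper integrates the pointwise bound on $\Rr_n$. You also conclude decay via condition (b) of Proposition \ref{lemdec}, whereas the paper uses condition (d).
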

\textbf{Proof.} It is very similar to that of Proposition 5.1 in \cite{glosta}, concerning an analogous statement
about global, decaying solutions of the NS equation. In spite of this, we think 
it is the case to report explicitly the argument. \parn
With $\bviz$ and $\bvi$ as in the above statement, let us consider the MHD Cauchy problem (\ref{solsmm}.a) (\ref{solsmm}.b),
where the datum $\buz \in \HD{\infty}$ is for the moment 
arbitrary; we write $\bu$ for the maximal exact solution. We apply to this Cauchy problem Proposition
\ref{main} on approximate solutions, with
\beq \bua := \bvi~. \feq
Since $\bvi$ solves exactly the MHD equations with datum $\bviz$, the differential
error of $\bvi$ is zero and the datum error (with respect to (\ref{solsmm}.b)) is $\buz - \bviz$;
we will use the tautological error and growth estimators associated
to $\bvi$ according to Definition \ref{defap}, which are
\beq \epsilon_p(t) := 0~, \quad \delta_p := \| \buz - \bviz \|_p~, \quad
\Dd_p(t) := \| \bvi(t) \|_p \qquad \mbox{for $p \in \reali,~ t \in [0,+\infty)$}~.
\label{errestt} \feq
To discuss the control (in)equalities of Proposition \ref{main} with these estimators, it is convenient to introduce
the primitive functions
\beq \Jj_p(t) := \int_{0}^t d s \, \| \vi(s) \|_p~ \leqs J_p
\qquad \mbox{for $p \in \reali,~ t \in [0,+\infty)$}
\label{jjp} \feq
\vskip -0.1cm \noindent
(the last inequality comes from comparison with \rref{dejn}). \parn
Let us choose $n \in (d/2 + 1, +\infty)$, and consider the control inequalities \rref{cont1} 
with the above tautological estimators. We try to fulfill the relations \rref{cont1} as 
equalities for an unknown $C^1$ function; this yields the Cauchy problem
\beq {d \Rr_n \over d t} = - \mu \Rr_n
+ (\Gp_n \| \bvi \|_n + \Kp_n \| \bvi \|_{n+1}) \Rr_n + \Gp_n \Rr^2_n~, \qquad \Rr_n(0) = \delta_n \label{concau} \feq
\vskip -0.1cm \noindent
for an unknown function
$\Rr_n \in C^1([0,\Tc),\reali)$. The maximal solution
of \rref{concau} is
\beq \Rr_n(t) := \delta_n \, {e^{\dd{- \mu t + \Gp_n \Jj_n(t) + \Kp_n \Jj_{n+1}(t)}}
\over 1 - \Gp_n \delta_n \LL_n(t)}  \quad \mbox{for $t \in [0,\Tc)$}~; \quad
\label{dern} \feq
\vskip -0.5cm \noindent
in the above, 
\beq \LL_n(t) := \int_{0}^t \! \! d s \, e^{{\dd{-\mu s + \Gp_n \Jj_n(s) + \Kp_n \Jj_{n+1}(s)}}}~~
\mbox{for $t \in [0,+\infty)$},~~
\Tc := \left\{ \barray{l} \vspace{-0.3cm}
\mbox{$+ \infty$,~ if $G_n \delta_n \LL_n(t) \neq 1$ for all $t \in (0,+\infty)$;} \\
~\\
\mbox{the unique $t \in (0,+\infty)$ s.t. $\Gp_n \delta_n \LL_n(t) = 1$,} \\ \mbox{if this exists}~
\farray \right. \label{deln} \feq
(note that $\LL_n$ is strictly increasing on $[0,+\infty)$). Due to \rref{jjp} and to the definition \rref{ron} of $\rho_n$, we have
\beq \LL_n(t) \leqs e^{\dd{\Gp_n J_n + \Kp_n J_{n+1}}} \int_{0}^t \! \! d s \, e^{{\dd{-\mu s}}} =
e^{\dd{\Gp_n J_n + \Kp_n J_{n+1}}} {1 - e^{\dd{-\mu t}} \over \mu} \leqs
{1 \over \mu} \, e^{\dd{\Gp_n J_n + \Kp_n J_{n+1}}} = {1 \over \Gp_n \rho_n}
~~\mbox{for $t \in [0,+\infty)$}\,. \label{512} \feq
From now on we assume, as in \rref{ball},
\beq \delta_n \equiv \| \buz - \bviz \|_n < \rho_n~. \feq
Then, due to \rref{512},
\vskip -0.1cm \noindent
\beq  G_n \delta_n \LL_n(t) \leqs {\delta_n \over \rho_n} < 1
~\mbox{for all $t \in [0,+\infty)$}\, \quad \mbox{whence}, T_c = + \infty\,; 
\label{impe} \feq
so, the solution $\Rr_n$ in Eqs. \rref{dern} \rref{deln}
is globally defined. Due to Proposition \ref{main}, this
implies that the solution $\bu$ of the MHD Cauchy problem (\ref{solsmm}.a) (\ref{solsmm}.b)
with datum $\buz$ is global as well, and that
\beq \| \bu(t) - \bvi(t) \|_n \leqs \Rr_n(t) \quad \mbox{for $t \in [0,+\infty)$}~;
\label{andn} \feq
\vskip -0.1cm \noindent
\beq \| \bu(t) - \bvi(t) \|_p  \leqs  \Rr_p(t)~\mbox{for $p \in (n,+\infty)$, $t \in [0,+\infty)$},~
\Rr_p(t) :=  \delta_p \, e^{\dd{-\mu t + \Aa_p(t)}}  ~,  ~~
\Aa_p(t) := \Gp_p \Jj_p(t) + \Kp_p \Jj_{p+1}(t) + \Gp_{p n} \! \int_{0}^t \! \! d s \,\Rr_n(s).
\label{rpp} \feq
(To derive Eq. \rref{rpp} one uses Eqs. \rref{urp} and \rref{rp},
recalling the form \rref{errestt} of the error and growth estimators
and Eq. \rref{jjp}).
Now, let us return to the expression \rref{dern} for $\Rr_n$ in which we insert
the inequalities \rref{jjp} for $\Jj_n, \Jj_{n+1}$ 
and the inequality \rref{impe} for $\LL_n$; this gives
\beq \Rr_n(t) \leqs  { e^{\dd{\Gp_n J_n + \Kp_n J_{n+1}}}
\over 1 - \delta_n/\rho_n} \, \delta_n \, e^{\dd{-\mu t}}  =
{\mu \, \delta_n / \rho_n \over \Gp_n (1 - \delta_n/\rho_n)} \, e^{\dd{-\mu t}}
\quad \mbox{for $t \in [0,+\infty)$ \, ;} \quad
\int_{0}^{+\infty} d t \, \Rr_n(t) \leqs {\delta_n/\rho_n \over \Gp_n(1 - \delta_n/\rho_n)}
\label{derna} \feq
(the above equality follows from \rref{ron}; the bound on the integral is a consequence of the bound on $\Rr_n$).
Now, $\| \bu(t) \|_n \leqs$ $\| \bvi(t) \|_n +$ $\| \bu(t) - \bvi(t) \|_n$ $\leqs \| \bvi(t) \|_n
+ \Rr_n(t)$; from here we infer, using the inequality \rref{dejn} for $J_n$ and
the inequality in \rref{derna} for the integral of $\Rr_n$,
\beq \int_{0}^{+\infty} d t \, \| \bu(t) \|_n \leqs J_n + {\delta_n/\rho_n \over \Gp_n(1 - \delta_n/\rho_n)}
< + \infty~. \feq
Thus the solution $\bu$ of the MHD Cauchy problem with datum $\buz$, besides
being global is decaying: this means that $\buz \in \ED{\infty}$,
so statement \rref{ball} is proved. \parn
To go on, let us insert the bound \rref{derna} for $\Rr_n$ in Eq. \rref{andn};
this yields the bound on $\| \bu(t) - \bvi(t) \|_n$ in \rref{boundn}. \parn
Now, let $p \in (n,+\infty)$ and let us consider the definition of $\Aa_p(t)$ in Eq. \rref{rpp};
inserting therein the inequalities \rref{jjp} about $\Jj_p$, $\Jj_{p+1}$,
writing $\int_{0}^t \! d s \,\Rr_n(s) \leqs \int_{0}^{+\infty} \! \! d s \,\Rr_n(s)$
and using for the last integral the bound \rref{derna}, we obtain
\beq \Aa_p(t) \leqs \Gp_p J_p + \Kp_{p} J_{p+1} + {\Gp_{p n} \, \delta_n/\rho_n \over \Gp_n(1 - \delta_n/\rho_n)}
\quad \mbox{for $t \in [0,+\infty)$}~. \label{weobt} \feq
Eqs. \rref{rpp} and \rref{weobt} yield the bound on $\| \bu(t) - \bvi(t) \|_p$ in \rref{boundn}. To conclude, let us make
the stronger assumption $\delta_n \leqs \rho_n/2$; then the bounds \rref{boundn} yield the simpler bounds
\rref{boundnhalf}, noting that $1/(1 - \delta_n/\rho_n) \leqs 2$ and $(\delta_n/\rho_n)/(1 - \delta_n/\rho_n) \leqs 1$. \fine
\vskip -0.2cm
\noindent
\begin{rema}
\label{remaprop}
\textbf{Remark.} Eq. \rref{ball} indicates that $\ED{\infty}$ is an \emph{open}
subset of $\HD{\infty}$ in the \Fre topology. 
\end{rema}
\section{Generalized Beltrami pairs and MHD}
\label{belt}
Throughout this section, following a tradition we often use the word "flow" 
as a synonym for "vector field". In the forthcoming subsection we review known facts about Beltrami-type
flows and the NS equation. In the subsequent subsection we propose an extention of this setting
to the MHD equations; this will be centered upon the notion of \emph{generalized Beltrami pair}, 
introduced in Definition \ref{debe}. We will show that the MHD Cauchy problem with initial datum
a generalized Beltrami pair $\bviz = (\viz, \biz) \in \HD{\infty}$ has a global, decaying solution in the sense of Definition \ref{defdec}; we will also
show that, for any real $p$, there exists a Beltrami pair whose norm $\| \bviz \|_p$ is arbitrarily large. 
\vskip 0.1cm \noindent
\textbf{Known facts about Beltrami-type flows and the NS equation.} Our review of 
these facts begins as follows.
\begin{prop}
\label{debege}
\textbf{Definition.} For any $d \in \{2,3,...\}$, a
\emph{generalized Beltrami flow on} $\Td$ is a vector field $\wz$ such that
\beq \wz \in \HM{\infty}~, \quad \Delta \wz = - \kappa^2 \, \wz~~\mbox{for some}\,\,\kappa \in [1,+\infty),
\quad \P(\wz, \wz) = 0~. \label{cocobe} \feq
\end{prop}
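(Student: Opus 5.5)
The statement above is a definition, so strictly speaking there is nothing to prove; what needs checking is that the notion is meaningful (the conditions in \rref{cocobe} are compatible and not vacuous) and that it has the consequence it is evidently meant to feed into, namely that a generalized Beltrami flow generates an explicit, global, decaying NS solution. My plan is to verify these two points in that order, since the later MHD construction via generalized Beltrami pairs will rest on them.

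\emph{Consistency and nonemptiness.} First I would note that all three conditions make sense: $\wz \in \HM{\infty}$ gives $\P(\wz,\wz) \in \HM{\infty}$ by the mapping properties of $\P$ recalled after \rref{katineqa}. Moreover, $\Delta \wz = -\kappa^2 \wz$ forces the Fourier support of $\wz$ to lie on the sphere $|k| = \kappa$, so $\kappa^2$ must be a positive integer; the restriction $\kappa \geqs 1$ is automatic because $\wz_0 = 0$. For nonemptiness in $d=3$, I would take $\wz$ with $\mbox{rot}\, \wz = \kappa \wz$, for example an ABC flow with $\kappa = 1$. The identity $(\wz \sc \nab)\wz = \nab(|\wz|^2/2) + (\mbox{rot}\,\wz)\wedge \wz$ then gives $(\wz\sc\nab)\wz = \nab(|\wz|^2/2)$, which $\ler$ annihilates, so $\P(\wz,\wz)=0$. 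For general $d \geqs 2$, I would use a single-mode example: $\wz = a\cos(k\sc x)$ with $k \sc a = 0$. Then $(\wz \sc \nab)\wz = 0$ identically, and $\Delta \wz = -|k|^2 \wz$.

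\emph{Dynamical consequence.} The key step is to set $w(t) := e^{-\nu\kappa^2 t}\wz$. This map is $C^\infty([0,+\infty),\HM{\infty})$, and by bilinearity
$$\P(w(t),w(t)) = e^{-2\nu\kappa^2 t}\P(\wz,\wz) = 0, \qquad \frac{dw}{dt} = -\nu\kappa^2 w = \nu\Delta w.$$
Hence $w$ solves (\ref{sol}.a) and (\ref{sol}.b) with datum $\wz$. By uniqueness of the maximal solution, it is the global NS solution, and it decays like $e^{-\nu\kappa^2 t}$ in every norm $\|\,\|_p$, regardless of the size of $\wz$.

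The main obstacle is not in this NS part, which is immediate, but in preparing the MHD extension. There one must show that pairs such as $(\viz,\biz)$ with both components generalized Beltrami flows sharing the same $\kappa$ also kill the mixed terms $\P(\viz,\biz)$ and $\P(\biz,\viz)$. I expect this to require an extra compatibility condition, for instance $\biz = \lambda \viz$, or both components being eigenfields of $\mbox{rot}$ with the same eigenvalue. With such a condition, $\bP$ vanishes along the explicit evolution $(e^{-\nu\kappa^2 t}\viz, e^{-\eta\kappa^2 t}\biz)$, and that evolution is then a global, decaying MHD solution.
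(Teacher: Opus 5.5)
Since the statement is a definition, there is nothing to prove. Your checks of the definition itself are sound and mirror the material the paper places around it: the automatic bound $\kappa \geqs 1$ (Remark~\ref{rembe}~(ii)), the single-mode example with $a\cdot k=0$ (Example~\ref{esge}), the $\rot$-eigenfield argument via $\ler\nabla(|\wz|^2/2)=0$ (Proposition~\ref{ciprop}), and the explicit global NS solution \rref{sobe}. One small slip: for $\wz=0$ the eigenvalue equation holds for every real $\kappa$, so ``$\kappa^2$ must be a positive integer'' needs $\wz\neq 0$. The paper treats this trivial case separately.

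Your last paragraph, however, contains a false claim. Two generalized Beltrami flows that are $\rot$-eigenfields with the same eigenvalue need not make the mixed terms cancel. On $\Tt$ take $\viz=(\sin x_3,\cos x_3,0)$ and $\biz=(0,\sin x_1,\cos x_1)$. Then $\rot\viz=\viz$ and $\rot\biz=\biz$, yet $(\viz\cdot\nabla)\biz-(\biz\cdot\nabla)\viz=(-\cos x_1\cos x_3,\,2\cos x_1\sin x_3,\,-\sin x_1\sin x_3)$. This field is nonzero and divergence free, hence fixed by $\ler$. Thus $\P(\viz,\biz)\neq\P(\biz,\viz)$, and $(e^{-\nu t}\viz,e^{-\eta t}\biz)$ violates the induction equation.

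The compatibility condition the paper actually imposes (Definition~\ref{debe}) is $(\viz\cdot\nabla)\biz=(\biz\cdot\nabla)\viz$. By Remark~\ref{remabelt}~(ii) this is equivalent to $\P(\viz,\biz)=\P(\biz,\viz)$. So only the difference of the mixed terms must vanish, not each term separately. Moreover, the two eigenvalues may differ, the solution being $(e^{-\kappa^2\nu t}\viz,e^{-\lambda^2\eta t}\biz)$.

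Your other suggestion, $\biz$ a scalar multiple of $\viz$, is correct; it is Example~\ref{estriv}. Pairs of $\rot$-eigenfields work only with extra structure, as in Example~\ref{esetrkal}, where both convective terms vanish identically.
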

\begin{rema}
\label{rembe}
\textbf{Remarks.} i) The above notion was proposed in \cite{glosta} for arbitrary $d$; for $d=3$, the same 
notion (or its analog on $\reali^3$) was previously considered in \cite{Ros} \cite{Wang}.
\parn
ii) The condition $\kappa \geqs 1$ in \rref{cocobe} is not at all restrictive, and reflects a general
features of the eigenvalues of $\Delta$ as an operator mapping into itself the space $\HM{\infty}$ (or even, the space $\Hz{\infty}$ of Eq. \rref{hinfz}). 
By elementary considerations involving the Fourier basis $(e_k)$ $(k \in \Zd)$, we readily see that the eigenvalues of $\Delta$ in $\HM{\infty}$
(or in $\Hz{\infty}$) coincide with the real numbers of the form $- \kappa^2$, where $\kappa = |k|$ for some $k \in \Zd \setminus \{0 \}$; 
this implies $\kappa \geqs 1$. Any eigenvector $\wz$ of $\Delta$ associated to an eigenvalue
$-\kappa^2$ is a sum $\wz = \sum_{k \in \Zd \setminus \{0 \}, |k| = \kappa} w_{0, k} e_k$; this also implies
\beq (-\Delta)^{p/2} \wz = \kappa^p \wz~, \quad \| \wz \|_{p} = \kappa^p \| \wz \|_{L^2}
\quad \mbox{for $p \in \reali$} \, . \label{norbel} \feq
(Needless to say, if $\wz=0$ the equality $\Delta \wz = - \kappa^2 \, \wz$ holds trivially
for any real $\kappa$, so that we can assume again $\kappa \geqs 1$; the relations \rref{norbel} hold as well for
trivial reasons.)
\end{rema}
\begin{rema}
\label{esge}
\textbf{Example} \cite{glosta}. For any $d \in \{2,3,...\}$ let us define a vector field 
$\wz : \Td \to \reali^d$ by
\beq \wz(x) := {\sqrt{2} \over (2 \pi)^{d/2}} \, W
\sin(k \sc x + \psi) \quad \mbox{for $x \in \Td$} \quad
(W \in \reali^d, k \in \Zd \setminus \{0\}, \psi \in \reali)\,. \label{debel} \feq
Then $\wz$ is $C^\infty$ and $ \la \wz \ra = 0$, so that $\wz \in \Hz{\infty}$ (see again
\rref{hinfz}). Moreover 
\beq \Delta \wz = - | k|^2 \wz \feq
and $\| \wz \|_{L^2} = |W|$, whence
\beq  \| \wz \|_p = |k|^p |W| \quad \mbox{for all $p \in \reali$} \feq
(concerning the last statement, recall Remark \ref{rembe} (ii)). To go on, we note that
$(\dive \, \wz)(x) = \sqrt{2} (2 \pi)^{-d/2}(W \sc k) \cos(k \sc x + \psi)$, and
$((\wz \sc \nab) \wz)(x)$ $= (2 \pi)^{-d} (W \sc k) W \sin(2 k \sc x + 2 \psi)$ 
(from here one could determine $\P(\wz, \wz) = - \LP((\wz \sc \nab) \wz)$. Thus
\beq \wz \in \HM{\infty} \quad  \Longleftrightarrow \quad W \sc k = 0 \quad \Longleftrightarrow \quad 
\mbox{$\wz$ is a generalized Beltrami flow, and 
\rref{cocobe} holds with $\kappa = |k|$}\,. \label{vk} \feq
\fine
\end{rema}
In the case $d=3$, the following notion is well known \cite{Ari} \cite{Enc} \cite{Enc2}.
\begin{prop}
\label{debeord}
\textbf{Definition.} A \emph{Beltrami flow} (also called a Beltrami-Trkal flow) on $\Tt$ is a vector field $\wz$
such that 
\beq \wz \in C^{\infty}(\Tt, \reali^3) \equiv \Cm{\infty}\,, \qquad \rot \wz = \pm \kappa \, \wz
~~\mbox{for some}\,\,\kappa \in [1,+\infty) \label{belf} \feq
(with any one of the above two signs).
\end{prop}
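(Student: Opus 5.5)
Strictly speaking, the statement above is a definition and needs no proof. What has to be checked is the fact it is meant to support: \emph{every Beltrami flow $\wz$ on $\Tt$ in the sense of \rref{belf} is a generalized Beltrami flow in the sense of Definition \ref{debege}, with the same $\kappa$}. I would verify the three conditions in \rref{cocobe} one at a time, using only the vector identities of the introductory footnote and the properties of $\LP$ listed in Section \ref{secsetting}.

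\textbf{Step 1: $\wz \in \HM{\infty}$.} Smoothness is given. Since $\kappa \geqs 1 \neq 0$, we can write $\wz = \pm \kappa^{-1} \rot \wz$. Then $\dive \wz = \pm \kappa^{-1} \dive \rot \wz = 0$. Moreover $\la \wz \ra = \pm \kappa^{-1} \la \rot \wz \ra = 0$, because each component of $\rot \wz$ is a sum of derivatives of smooth periodic functions, which have zero mean on $\Tt$. By \rref{cinf}, this gives $\wz \in \HM{\infty}$.

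\textbf{Step 2: $\Delta \wz = -\kappa^2 \wz$.} Use the identity $\rot \rot v = \nab(\dive v) - \Delta v$. With $\dive \wz = 0$ from Step 1, it gives $\Delta \wz = -\rot \rot \wz = -(\pm\kappa)(\pm \kappa)\wz = -\kappa^2 \wz$. Remark \ref{rembe} (ii) then forces $\kappa = |k|$ for some $k \in \Zt \setminus \{0\}$, which is consistent with $\kappa \geqs 1$.

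\textbf{Step 3: $\P(\wz,\wz)=0$.} From the footnote identity $(\wz \sc \nab)\wz = \nab(|\wz|^2/2) + (\rot \wz)\wedge \wz$ and the Beltrami relation, the last term equals $\pm\kappa\, \wz \wedge \wz = 0$. Hence $(\wz\sc\nab)\wz$ is the gradient of the smooth function $|\wz|^2/2$. Since $\LP$ annihilates gradients, $\P(\wz,\wz) = -\LP\big((\wz\sc\nab)\wz\big) = 0$.

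None of these steps is a serious obstacle. The only point needing care is Step 1: $\la \wz \ra = 0$ is not an assumption in \rref{belf}. It follows only because $\kappa \neq 0$ lets us express $\wz$ as a curl. This is exactly where the requirement $\kappa \in [1,+\infty)$ (rather than $\kappa \geqs 0$) is used.
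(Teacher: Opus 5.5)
You correctly note that this is a definition, and your check of the fact it supports follows essentially the same route as the paper's proof of Proposition \ref{ciprop}: write $\wz = (\pm\kappa)^{-1}\rot \wz$ to get zero divergence and mean, use $\Delta = -\rot\rot + \nab\,\dive$ to get $\Delta \wz = -\kappa^2 \wz$, and obtain $\P(\wz,\wz)=0$ from $(\wz\sc\nab)\wz = \nab(|\wz|^2/2)$. One small slip, which plays no role in the argument: in the aside in Step 2, Remark \ref{rembe} (ii) forces $\kappa = |k|$ only when $\wz \neq 0$.
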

The forthcoming statement reflects known facts, and is reviewed just for completeness.
\begin{prop}
\label{ciprop}
\textbf{Proposition.}
A Beltrami flow on $\Tt$ is a generalized Beltrami flow. In fact, if a vector field $\wz$ fulfills 
conditions \rref{belf}, it fulfills as well conditions \rref{cocobe} (with the same $\kappa$).
\end{prop}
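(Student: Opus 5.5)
We must show that if $\wz \in C^\infty(\Tt,\reali^3)$ satisfies $\rot \wz = \pm\kappa\,\wz$ with $\kappa \geqs 1$, then the three conditions in \rref{cocobe} hold: $\wz \in \HM{\infty}$, $\Delta \wz = -\kappa^2 \wz$, and $\P(\wz,\wz)=0$. The argument uses only the elementary vector identities recalled in the footnote of the Introduction, together with $\rot\rot = \nabla\,\dive - \Delta$.

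\emph{Step 1: $\wz \in \HM{\infty}$.} Taking the divergence of $\rot \wz = \pm\kappa\,\wz$ gives $0 = \dive \rot \wz = \pm\kappa\, \dive \wz$. Since $\kappa \neq 0$, we get $\dive \wz = 0$. For the mean, note that $\la \rot \wz \ra = 0$, because every component of $\rot \wz$ is a combination of partial derivatives of periodic functions. Hence $\pm\kappa \la \wz \ra = 0$, so $\la \wz \ra = 0$. By \rref{cinf}, $\wz \in \HM{\infty}$.

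\emph{Step 2: eigenvalue equation.} Apply $\rot$ to the Beltrami relation twice:
$$\rot\rot \wz = \pm\kappa \rot \wz = \kappa^2 \wz.$$
On the other hand, $\rot\rot \wz = \nabla(\dive \wz) - \Delta \wz = -\Delta \wz$ by Step 1. Therefore $\Delta \wz = -\kappa^2 \wz$, with the same $\kappa \geqs 1$. This can also be checked mode by mode in Fourier space: $\rot$ acts on $e_k$ as $i k \wedge \cdot$, and $k \wedge (k \wedge a) = -|k|^2 a$ whenever $k \sc a = 0$.

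\emph{Step 3: $\P(\wz,\wz)=0$.} Use $(\wz \sc \nab)\wz = \nab(|\wz|^2/2) + (\rot \wz)\wedge \wz$. The last term equals $\pm\kappa\, \wz \wedge \wz = 0$. So $(\wz\sc\nab)\wz$ is the gradient of the smooth real function $|\wz|^2/2$. Since $\ler$ annihilates gradients, $\P(\wz,\wz) = -\ler((\wz\sc\nab)\wz) = 0$.

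No step is a real obstacle. The only delicate point is Step 1: it needs $\kappa \neq 0$ to deduce that $\wz$ is divergence free and has zero mean, and both properties are required for membership in $\HM{\infty}$ and for the identity $\rot\rot = -\Delta$ used in Step 2.
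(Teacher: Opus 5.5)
Your proof is correct and follows essentially the same route as the paper's. Both arguments obtain $\mbox{div}\, w_0 = 0$ and $\langle w_0 \rangle = 0$ from $w_0 = (\pm\kappa)^{-1}\mbox{rot}\, w_0$, then $\Delta w_0 = -\mbox{rot}\,\mbox{rot}\, w_0 = -\kappa^2 w_0$, and finally $\mathscr{P}(w_0,w_0)=0$ from $(w_0\cdot\nabla)w_0 = (\mbox{rot}\, w_0)\wedge w_0 + \nabla(|w_0|^2/2)$ together with the fact that the Leray projection annihilates gradients.
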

\textbf{Proof.} It is well known that, for each $C^\infty$ vector field $w$ on $\Tt$, one has:
$\dive(\rot w)=0$; $\la \rot w \ra =0$;
$\Delta w = - \rot \rot w + \nab(\dive \,w)$; 
$(w \sc \nabla) w = (\rot w) \wedge w + \nabla (|w|^2/2)$. That said, assume $\wz$ fulfills 
\rref{belf}. Then
$\wz = (\pm \kappa)^{-1} \rot \wz$, whence $\dive \, \wz = (\pm \kappa)^{-1} \dive(\rot \wz)=0$ 
and $\la \wz \ra = (\pm \kappa)^{-1} \la \rot \wz \ra=0$; thus, $\wz \in \HM{\infty}$. Moreover
$\Delta \wz = - \rot \rot \wz = - (\pm \kappa)^2 \wz = - \kappa^2 \wz$. To go on, we note that
$(\wz \sc \nabla) \wz = (\rot \wz) \, \wedge \, \wz + \nabla (|\wz |^2/2)$ $= \pm \kappa \, \wz \wedge \wz + \nabla (|\wz|^2/2)$
$=\nabla (|\wz|^2/2)$; this implies
$\P(\wz, \wz) = - \ler \nabla (|\wz|^2/2)=0$, since
$\ler$ annihilates gradients. Summing up, \rref{cocobe} holds. \fine
\begin{rema}
\textbf{Remark.} The condition $\kappa \geqs 1$ in \rref{belf} has the same degree of generality
as the condition $\kappa > 0$. In fact, let $\wz \in \Cm{\infty}$ and $\rot \wz = \pm \kappa \, \wz$
for some $\kappa > 0$. Then, repeating the argument of the previous proof, we infer that
$\wz \in \HM{\infty}$ and $\Delta \wz = - \kappa^2 \wz$. From here, recalling
Remark \ref{rembe} (ii), we are led to $\kappa \geqs 1$.
\end{rema}
\begin{rema}
\label{esbeord}
\textbf{Example.}
Let $\wz: \To^3 \to \reali^3$ be the $C^\infty$ vector field defined as follows: 
\beq \wz(x) = {1 \over (2 \pi)^{3/2}} \Big[ \ep\, (\alpha, \beta, 0)  \sin(\kappa x_3) +  (-\beta, \alpha, 0)  \cos(\kappa x_3) \Big] \quad
\mbox{for $x = (x_1, x_2, x_3) \in \To^3$}\, (\alpha, \beta \in \reali, \ep \in \{\pm 1\},\kappa \in \{1,2,3,...\})\,. \label{esbel} \feq
One readily ckecks that
\beq \rot \wz = \ep \, \kappa \, \wz\,,  \feq
so we have a Beltrami flow. We note that
\beq \| \wz \|_{L^2} = \sqrt{\alpha^2 + \beta^2}\,, \qquad \| \wz \|_{p} = \kappa^p \sqrt{\alpha^2 + \beta^2}
\quad \mbox{for $p \in \reali$}       
\feq
(where the first statement is checked directly, and the second follows again from Remark \rref{rembe} (ii)). 
\end{rema}
\begin{rema}
\label{notbel}
\textbf{Remark.} Let us reconsider the vector field $\wz$ of Eq. \rref{debel}, with $W \sc k =0$ as in \rref{vk}.
We already know $\wz$ to be a generalized Beltrami flow in any space dimension $d$; for $d=3$, 
it should be noted that $\wz$ is \emph{not} a Beltrami flow if $W \neq 0$. 
In fact $(\rot \wz)(x) = \sqrt{2} (2 \pi)^{-3/2} (k \wedge W) \cos(k \sc x + \psi)$; so, $\rot \wz$ is not proportional to $\wz$ if
$W \neq 0$.
\end{rema}
The relevance of generalized Beltrami flows for the NS equation is well known. In fact, from the very Definition
\ref{debege} one immediately infers
\begin{prop}
\textbf{Proposition.} Let $d \in \{2,3,...\}$, $\nu > 0$. If $\wz$ is a generalized Beltrami flow and
$\kappa$ is as in Eq. \rref{cocobe}, 
the NS Cauchy problem (\ref{sol}.a) (\ref{sol}.b) with initial  datum $\wz$ has the global solution
\vskip -0.2cm
\beq w(t) = e^{-\kappa^2 \nu t} \wz \qquad \mbox{for $t \in [0,+\infty)$}~. \label{sobe} \feq
\end{prop}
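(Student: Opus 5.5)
The plan is to verify directly that $w(t) := e^{-\kappa^2 \nu t} \wz$ is a solution of the NS Cauchy problem in the sense fixed in Section \ref{secsetting}, namely a map in $C^\infty([0,+\infty), \HM{\infty})$ fulfilling Eqs. (\ref{sol}.a) and (\ref{sol}.b). By the uniqueness of the maximal solution recalled after Eq. \rref{solsm}, this map is then the maximal solution, and it is global.

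\emph{Regularity and initial condition.} Since $\wz \in \HM{\infty}$ by \rref{cocobe}, the map $t \mapsto e^{-\kappa^2\nu t}\wz$ is a scalar $C^\infty$ function times a fixed element of the \Fre space $\HM{\infty}$. Hence it belongs to $C^\infty([0,+\infty),\HM{\infty})$, with derivative $dw/dt = -\kappa^2\nu\, e^{-\kappa^2\nu t}\wz$. Moreover $w(0) = \wz$, so (\ref{sol}.b) holds.

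\emph{Evolution equation.} I use linearity of $\Delta$ together with $\Delta \wz = -\kappa^2 \wz$, which gives
$$\nu\Delta w(t) = -\kappa^2\nu\, e^{-\kappa^2\nu t}\wz = {dw \over dt}(t)\,.$$
Bilinearity of $\P$ together with $\P(\wz,\wz)=0$ gives $\P(w(t),w(t)) = e^{-2\kappa^2\nu t}\P(\wz,\wz) = 0$. Adding these two facts yields (\ref{sol}.a) for every $t \in [0,+\infty)$.

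There is no real obstacle here. The only point deserving a word is that differentiability is meant in the \Fre topology of $\HM{\infty}$. This is immediate, because each seminorm $\|\cdot\|_p$ of the relevant difference quotients reduces to a scalar computation multiplied by $\|\wz\|_p$.
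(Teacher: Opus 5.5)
Your proposal is correct and follows the paper's approach. The paper says the statement follows immediately from Definition \ref{debege}, i.e.\ from the direct check $dw/dt = -\kappa^2\nu\, w = \nu\Delta w$ and $\P(w(t),w(t)) = e^{-2\kappa^2\nu t}\P(\wz,\wz) = 0$, which is what you do; your remarks on Fr\'echet differentiability and maximality are harmless extras, since a solution defined on all of $[0,+\infty)$ is automatically maximal.
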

The function in Eq. \rref{sobe} fulfills the general definition of global, decaying NS solution 
given in \cite{glosta}. Generalized Beltrami flows and the NS solutions with these data
were employed in \cite{glosta} to exemplify the global stability result derived 
therein for the NS equation. 
\vskip 0.2cm \noindent
\textbf{Beltrami-like pairs and the MHD equations.} Pairs of Beltrami-type flows 
have been occasionally considered in the literature on MHD equations, for several purposes:
see \cite{Car} \cite{Cha}  \cite{Hol} \cite{IqSh} \cite{YoMa} 
and references therein. Hereafter we attempt a more
systematic approach to the subject; this will be connected
with the stability result in Theorem \ref{proron} of the present work. First of all,
we propose the following
\begin{prop}
\label{debe}
\textbf{Definition.} Let $d \in \{2,3,...\}$. A \emph{generalized Beltrami pair} is 
a pair $\bviz = (\viz, \biz)$ of vector fields on $\Td$ with the following properties (i)(ii): \parn
(i) Both $\viz$ and $\biz$ are generalized Beltrami flows, in the sense of Definition \ref{debege}. So,
\beq \viz \in \HM{\infty}\,, \,\, \Delta \viz = - \kappa^2 \, \viz\, \, (\kappa \in [1, + \infty))\,,
\,\, \P(\viz, \viz) = 0\,; \quad \biz \in \HM{\infty}\,, \,\,
\Delta \biz = - \lambda^2 \, \biz \, \, (\lambda \in [1, + \infty))\,, \,\, \P(\biz, \biz) = 0 \,. \label{cobe} \feq
(ii) In addition, 
\beq (\viz \sc \nabla) \biz = (\biz \sc \nabla) \viz\,. \label{cobe2} \feq
\end{prop}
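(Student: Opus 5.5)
Since the statement just given is a definition, what needs proof is its intended consequence: the MHD Cauchy problem (\ref{solsmm}.a) (\ref{solsmm}.b) with datum a generalized Beltrami pair $\bviz=(\viz,\biz)$ has a global, decaying solution, i.e. $\bviz\in\ED{\infty}$. I plan to write the solution down explicitly, by analogy with the NS formula \rref{sobe}, and verify it directly. The candidate is
$$\bvi(t) := \big(\vi(t), \bi(t)\big) := \big(e^{-\kappa^2\nu t}\viz,\; e^{-\lambda^2\eta t}\biz\big), \qquad t\in[0,+\infty).$$
This is manifestly in $C^\infty([0,+\infty),\HD{\infty})$, since $\viz,\biz\in\HM{\infty}$, and it satisfies $\bvi(0)=\bviz$.

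The first step checks the linear part. By \rref{cobe} we have $d\vi/dt=-\kappa^2\nu\vi=\nu\Delta\vi$ and $d\bi/dt=\eta\Delta\bi$, so $d\bvi/dt=\bA\bvi$. It then remains to show that $\bP(\bvi(t),\bvi(t))=0$ for all $t$. I will use the definition \rref{debp} and bilinearity of $\P$.
\begin{itemize}
\item $\P(\vi,\vi)=e^{-2\kappa^2\nu t}\P(\viz,\viz)=0$ and $\P(\bi,\bi)=e^{-2\lambda^2\eta t}\P(\biz,\biz)=0$, by \rref{cobe}. Hence the velocity component $\P(\vi,\vi)-\P(\bi,\bi)$ vanishes.
\item The magnetic component is $\P(\vi,\bi)-\P(\bi,\vi)=-e^{-(\kappa^2\nu+\lambda^2\eta)t}\,\ler\big((\viz\sc\nab)\biz-(\biz\sc\nab)\viz\big)$. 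This is zero by the commutation condition \rref{cobe2}.
\end{itemize}
This is the only place where condition (ii) enters, and it is exactly what kills the cross terms. Hence $\bvi$ solves (\ref{solsmm}.a) (\ref{solsmm}.b). By uniqueness of the maximal solution, $\bvi$ is that maximal solution, and it is global.

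For decay, I use \rref{norbel}, which gives
$$\|\bvi(t)\|_p^2 = e^{-2\kappa^2\nu t}\kappa^{2p}\|\viz\|_{L^2}^2 + e^{-2\lambda^2\eta t}\lambda^{2p}\|\biz\|_{L^2}^2 .$$
Since $\kappa,\lambda\geqs 1$, we have $\kappa^2\nu,\lambda^2\eta\geqs\mu$, so $\|\bvi(t)\|_p\leqs\|\bviz\|_p\,e^{-\mu t}$ for every real $p$. This is condition (g) of Proposition \ref{lemdec}, so $\bviz\in\ED{\infty}$ by Definition \ref{defdec}.

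I expect no analytic obstacle here. The only delicate point is the separate announced claim that such pairs can be arbitrarily large in every $\|\,\|_p$, which is needed to make the examples genuinely non-perturbative relative to Lemma \ref{lemzero}. For that I would take $\viz=\alpha\wz$ and $\biz=\beta\wz$, with $\wz$ a single generalized Beltrami flow as in Example \ref{esge} or Example \ref{esbeord}, and $\alpha,\beta\in\reali$ arbitrary. Then \rref{cobe} follows by bilinearity, since $\P(\alpha\wz,\alpha\wz)=\alpha^2\P(\wz,\wz)=0$. Condition \rref{cobe2} holds trivially, because both sides equal $\alpha\beta(\wz\sc\nab)\wz$. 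The norm $\|\bviz\|_p=\sqrt{\alpha^2+\beta^2}\,\|\wz\|_p$ becomes unbounded as $\alpha,\beta$ grow.
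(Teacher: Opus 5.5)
Your proposal is correct and takes essentially the paper's own route: the paper proves the intended consequence in Proposition~\ref{caubel} just as you do, by checking that $\vi(t)=e^{-\kappa^2\nu t}\viz$, $\bi(t)=e^{-\lambda^2\eta t}\biz$ solves the Cauchy problem (with \rref{cobe2} cancelling the cross terms in the magnetic equation) and reading off decay from \rref{norbell}, and your large-data family $(\alpha\wz,\beta\wz)$ is Example~\ref{estriv} up to reparametrization. The only minor difference is your decay bound $\|\bvi(t)\|_p\leqs\|\bviz\|_p\,e^{-\mu t}$, which has a sharper prefactor but a slower rate than the paper's $e^{-\min(\kappa^2,\lambda^2)\mu t}\sqrt{\kappa^{2p}+\lambda^{2p}}\,\|\bviz\|_{L^2}$; either one yields condition (g) of Proposition~\ref{lemdec}.
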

\begin{rema}
\label{remabelt}
\textbf{Remarks.} 
i) Concerning the conditions $\kappa, \lambda \geqs 1$ in \rref{cobe}, let us recall Remark \ref{rembe} (ii). 
According to the same remark, if $\viz, \biz \in \HM{\infty}$ and $\Delta \viz, \Delta \biz$ are as 
in \rref{cobe}, we have 
\beq \| \viz \|_{p} = \kappa^p \| \viz \|_{L^2}\,, \quad \| \biz \|_{p} = \lambda^p \| \biz \|_{L^2}
\quad \mbox{for $p \in \reali$} \, . \label{norbell} \feq
ii) Let $\viz, \biz \in \HM{\infty}$. The mere fact that $\viz, \biz$ are divergence free ensures $\dive[(\viz \sc \nabla) \biz - (\biz \sc \nabla) \viz] =0$
({\footnote{\label{fnx} In fact, $\dive[(\viz \sc \nabla) \biz - (\biz \sc \nabla) \uz]$ 
$=\sum_{r,s=1}^d \partial_r (\vi_{0 s} \partial_s \bi_{0 r} - \bi_{0 s} \partial_s \vi_{0 r})$
$= \sum_{r,s=1}^d \partial_r \vi_{0 s} \partial_s \bi_{0 r} + \sum_{r,s=1}^d \vi_{0 s} \partial_s \partial_r \bi_{0 r}$
$- \sum_{r,s=1}^d \partial_r \bi_{0 s} \partial_s \vi_{0 r} - \sum_{r,s=1}^d \bi_{0 s} \partial_s \partial_r \vi_{0 r}$
$ = \sum_{r,s=1}^d \partial_r \vi_{0 s} \partial_s \bi_{0 r}$ $ + (\viz \sc \nab)(\dive \biz)$
$- \sum_{r,s=1}^d \partial_s \bi_{0 r} \partial_r \vi_{0 s} $ - $(\biz \sc \nab)(\dive \viz)$ $=0$
(since the last two sums over $r,s$ coincide, and $\dive \viz = \dive \biz=0$).}}).
Thus $\P(\viz, \biz) - \P(\biz, \viz) = - \ler[ (\viz \sc \nabla) \biz - (\biz \sc \nabla) \viz ] = 
- [(\viz \sc \nabla) \biz - (\biz \sc \nabla) \viz]$, that implies the following:
\beq (\viz \sc \nabla) \biz = (\biz \sc \nabla) \viz \quad \Longleftrightarrow \quad \P(\viz, \biz) = \P(\biz, \viz) \,.
\label{implyp} \feq
\end{rema} 
\begin{prop}
\label{caubel}
\textbf{Proposition}. Let $d \in \{2,3,...\}$ and $\nu, \eta, \mu$ as in \rref{asinf} \rref{demu}. 
Assume $\bviz = (\viz, \biz)$ is a generalized Beltrami pair for the MHD equations, and consider
the MHD Cauchy problem (\ref{solsmm}.a) (\ref{solsmm}.b) (viz., (\ref{solsm}.a) (\ref{solsm}.b))
with this initial datum; let $\kappa, \lambda$ be as in \rref{cobe}.
Then the following holds: \parn
i)The maximal solution $\bvi = (\vi, \bi)$ of the Cauchy problem is global, and given by
\beq \vi(t) = e^{- \kappa^2 \nu t} \viz\,, \qquad \bi(t) = e^{- \lambda^2 \eta t} \biz \qquad \mbox{for all $t \in [0,+\infty)$}\,. \label{sobel} \feq
ii) For all $p \in \reali$ and $t \in [0,+\infty)$, one has
\beq \| \bvi(t) \|_p = \sqrt{ e^{- 2 \kappa^2 \nu t} \, \kappa^{2 p} \| \viz \|^2_{L^2} + 
e^{- 2 \lambda^2 \eta t} \, \lambda^{2 p} \| \biz \|^2_{L^2} } \leqs e^{- \kappa^2 \nu t} \, \kappa^{p} \| \viz \|_{L^2} + 
e^{- \lambda^2 \eta t} \, \lambda^{p} \| \biz \|_{L^2} \leqs 
e^{- \min(\kappa^2, \lambda^2) \mu t} \sqrt{\kappa^{2 p} + \lambda^{2 p}} \| \bviz \|_{L^2} \,; \label{vtp} \feq
so, besides being global $\bvi$ is decaying in the sense of Definition \ref{defdec}, and $\bviz$ is in the space $\ED{\infty}$ introduced
herein. \parn
iii) For all $p \in \reali$ one has
\beq \int_{0}^{+\infty} \! \! \! d t \, \| \bvi(t) \|_p \leqs J_p\,, \qquad J_p := {\kappa^{p-2} \over \nu} \| \viz \|_{L^2} 
+ {\lambda^{p-2} \over \eta} \| \biz \|_{L^2}\,. \label{jpbel} \feq
\end{prop}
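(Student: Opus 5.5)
The plan is to verify directly that the explicit pair \rref{sobel} solves the MHD Cauchy problem (\ref{solsm}.a) (\ref{solsm}.b). Uniqueness of the maximal solution then identifies it with $\bvi$ and shows that $\bvi$ is global. After that, (ii) and (iii) are elementary computations based on \rref{norbell}.

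\textbf{Step 1 (i).} Set $\vi(t) := e^{-\kappa^2 \nu t}\viz$ and $\bi(t) := e^{-\lambda^2 \eta t}\biz$. Both maps lie in $C^\infty([0,+\infty), \HM{\infty})$, and they satisfy the initial conditions. For the first equation in (\ref{solsm}.a), we have $d\vi/dt = -\kappa^2\nu \vi = \nu \Delta \vi$ by \rref{cobe}. By bilinearity, $\P(\vi,\vi) = e^{-2\kappa^2\nu t}\P(\viz,\viz) = 0$ and $\P(\bi,\bi) = e^{-2\lambda^2\eta t}\P(\biz,\biz)=0$, so the nonlinear terms drop out. For the second equation, $d\bi/dt = \eta\Delta \bi$ holds for the same reason. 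The remaining nonlinear term is $\P(\vi,\bi) - \P(\bi,\vi) = e^{-\kappa^2\nu t - \lambda^2 \eta t}\,[\P(\viz,\biz) - \P(\biz,\viz)]$, and this vanishes by \rref{cobe2} combined with \rref{implyp} in Remark \ref{remabelt} (ii). Hence $\bvi = (\vi,\bi)$ is a global solution. Since the maximal solution is unique, it coincides with this one.

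\textbf{Step 2 (ii).} By \rref{inpbn} and \rref{norbell}, $\| \bvi(t) \|_p^2 = e^{-2\kappa^2\nu t}\kappa^{2p}\| \viz\|_{L^2}^2 + e^{-2\lambda^2\eta t}\lambda^{2p}\| \biz\|_{L^2}^2$. The first inequality in \rref{vtp} is $\sqrt{a^2+b^2}\leqs a+b$. For the second, bound each exponential by $e^{-\min(\kappa^2,\lambda^2)\mu t}$ (using $\nu,\eta \geqs \mu$), and apply Cauchy–Schwarz in $\reali^2$: $\kappa^p x + \lambda^p y \leqs \sqrt{\kappa^{2p}+\lambda^{2p}}\sqrt{x^2+y^2}$, where $\sqrt{x^2+y^2} = \| \bviz\|_{L^2}$. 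Since $\kappa,\lambda\geqs 1$, the resulting rate is at least $e^{-\mu t}$. Thus condition (g) of Proposition \ref{lemdec} holds, $\bvi$ is decaying, and $\bviz\in\ED{\infty}$.

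\textbf{Step 3 (iii).} Integrate the middle bound in \rref{vtp} over $[0,+\infty)$. Using $\int_0^{+\infty} e^{-\kappa^2\nu t}dt = 1/(\kappa^2 \nu)$ and the analogous formula for $\lambda,\eta$ gives exactly $J_p$ as in \rref{jpbel}.

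No step is a real obstacle. The only point requiring care is the cancellation of the cross terms in the $b$-equation, which is handled by the equivalence \rref{implyp}.
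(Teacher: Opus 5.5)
Your proposal is correct and follows the paper's proof essentially step by step. You verify \rref{sobel} directly using \rref{cobe}, bilinearity of $\P$, and the equivalence \rref{implyp} for the cross term; you then get the exact norm from \rref{inpbn} and \rref{norbell}, use $\sqrt{a^2+b^2}\leqs a+b$ plus Cauchy--Schwarz, and integrate the middle bound in \rref{vtp} to obtain $J_p$. Your explicit appeals to uniqueness of the maximal solution, and to $\kappa,\lambda\geqs 1$ for condition (g), only spell out what the paper leaves implicit.
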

\textbf{Proof.} i) One easily checks that the function $\bvi$ defined by \rref{sobel} fulfills all equations in (\ref{solsm}.a) (\ref{solsm}.b). 
For example, the second evolution equation in (\ref{solsm}.a) holds because 
$(d \bi/ d t)(t) = - \lambda^2 \eta e^{- \lambda^2 \eta t} \biz$, while
$\eta \Delta \bi(t) + \PPP(\vi(t), \bi(t)) - \PPP(\bi(t),\vi(t))$ 
$=\eta e^{- \lambda^2 \eta t} \Delta \biz + e^{-(\kappa^2 \nu + \lambda^2 \eta) t}[\PPP(\viz, \biz) - \PPP(\biz,\viz)]$
$= - \lambda^2 \eta e^{- \lambda^2 \eta t}\biz$ due to the assumption on $\Delta \biz$ in \rref{cobe}, to the assumption \rref{cobe2}
and to the equivalence \rref{implyp}.
\parn
ii) Eq. \rref{sobel} for $\bvi$ and Eq. \rref{inpbn} for $\|\, \|_p$ 
give $\| \bvi(t) \|_p = \sqrt{ e^{- 2 \kappa^2 \nu t} \, \| \viz \|^2_p + e^{- 2 \lambda^2 \et t} \, \| \biz \|^2_p }$; 
from here, representing $\| \viz \|_p$ and $\| \biz \|_p$ via \rref{norbell}, we obtain
the exact expression for $\| \bvi(t) \|_p$ in \rref{vtp}. The first inequality in \rref{vtp} follows 
noting that $\sqrt{\alpha^2 + \beta^2} \leqs \alpha + \beta$ for all $\alpha, \beta \geqs 0$; the second
inequality in \rref{vtp} follows from the first one writing $e^{- \kappa^2 \nu t}, e^{- \lambda^2 \eta t} \leqs 
e^{- \min(\kappa^2, \lambda^2) \mu t}$, and applying the Schwarz inequality to the expression 
$\kappa^{p} \| \viz \|_{L^2} + \lambda^{p} \| \biz \|_{L^2}$. \parn
iii) The first inequality in \rref{vtp} implies $\int_{0}^{+\infty} \! \! \! d t \, \| \bvi(t) \|_p $ 
$\leqs \kappa^{p} \| \viz \|_{L^2} \int_{0}^{+\infty} \! \! \! d t e^{- \kappa^2 \nu t}$ 
$+ \lambda^{p} \| \biz \|_{L^2} \int_{0}^{+\infty} \! \! \! d t e^{- \lambda^2 \eta t}$, and computing 
the last two integrals we get the thesis \rref{jpbel}. \fine
\vskip 0.15 cm \noindent
Of course, Theorem \ref{proron} can be applied to any generalized Beltrami pair  $\bviz$, using for the
related estimates the expressions \rref{jpbel} of the constants $J_p$. 
The rest of the present subsection will be devoted to giving examples of generalized Beltrami pairs.
A first example is the following. 
\begin{rema}
\label{estriv}
\textbf{Example}. Fix a dimension $d \geqs 2$, and consider any generalized Beltrami flow $\wz$ on $\Td$, 
in the sense of Definition \ref{debege} (this could be, e.g., the vector field
described by Eqs. \rref{debel} \rref{vk}, or, if $d=3$, the Beltrami flow of Eq. \rref{esbel}. 
Then, any pair of the form
\beq \bviz = (\wz, \alpha \wz) \quad \mbox{or} \quad \bviz = (\alpha \wz, \wz) \qquad (\alpha \in \reali) \label{estrial} \feq
is a generalized Beltrami pair; in fact, with the choices of $\viz$ and $\biz$ indicated by \rref{estrial}, conditions \rref{cobe} follow
immediately (with $\lambda = \kappa$) from the assumptions \rref{cocobe} on $\wz$, and \rref{cobe2} holds trivially. 
\fine
\end{rema}
The above example of a generalized Beltrami pair is rather trivial, but it has its own utility. 
Pairs
of the form \rref{estrial} have been considered in \cite{Cha}, and are related in \cite{Hol} to the notion of \emph{Alfv\'en solution}.
An interesting
result was obtained in \cite{Car} starting from the pair $\bviz = (0, \wz)$, where $\wz$ is a Beltrami flow on $\Tt$; this has the second
form in Eq. \rref{estrial}, with $\alpha=0$. Considering the MHD equations with initial datum $\bviz$, and 
applying their global stability result on these equations, the authors of \cite{Car} proved that certain MHD initial 
data, close to $\bviz$, display in their evolution the magnetic reconnection phenomenon. 
A second, certainly less trivial example of a Beltrami pair is as follows.
\begin{rema}
\label{bestes}
\textbf{Example.}
For arbitrary $d \in \{2,3,...\}$, let $\viz, \biz : \Td \to \reali^d$ be the
vector fields defined by
\beq \viz(x) := {\sqrt{2} \over (2 \pi)^{d/2}} \, \Vi \sin(k \sc x)\,, \quad 
\biz(x) := {\sqrt{2} \over (2 \pi)^{d/2}} \, \Bi \sin( \ell \sc x + \phi) \quad 
\mbox{for $x \in \Td$}, \qquad (\Vi, \Bi \in \reali^d,\, k, \ell \in \Zd \setminus \{0 \}, \,
\phi \in \reali)\,. \label{desin} \feq
We claim the following:
\begin{itemize}
\item[(i)] Both $\viz, \biz$ are $C^\infty$ with vanishing mean, i.e., $\viz, \biz \in \Hz{\infty}$ (recall Eq. \rref{hinfz}).
\item[(ii)] We have
\beq \Delta \viz = - |k|^2 \viz\,, \qquad \Delta \biz = - |\ell|^2 \biz\,. \label{lavb} \feq
and $\| \viz \|_{L^2} = |\Vi|$, $\| \biz \|_{L^2} = |\Bi|$, whence
\beq \| \viz \|_p = | \Vi | \, |k|^p \,, \qquad \| \biz \|_p = | \Bi | \, |\ell|^p \quad \mbox{for all $p \in \reali$}\,. \label{norvbes} \feq
\item[(iii)] 
The pair $\bviz = (\viz, \biz)$ is a generalized Beltrami pair (in the sense of Definition \ref{debe}) if and only if
\beq \Vi \sc k = 0\,, \qquad \Bi \sc \ell = 0\,, \qquad (\Vi \sc \ell) \Bi = 0\,, \qquad (\Bi \sc k) \Vi = 0  \label{coab} \feq
(of course, in this case, the relations involving $\Delta$ in \rref{cobe} hold with $\kappa = |k|$ and $\lambda = |\ell|$
due to \rref{lavb}).
\item[(iv)] If $\Vi, \Bi \neq 0$, conditions \rref{coab} to get a generalized Beltrami pair are equivalent to $\Vi \sc k = \Vi \sc \ell =0$ and
$\Bi \sc k = \Bi \sc \ell =0$, i.e., 
\beq \Vi, \Bi \in \{k, \ell \}^{\perp}\,, \feq
with ${\,}^{\perp}$ indicating the orthogonal complement in $\reali^d$.
\end{itemize}
We proceed to the verification of the above claims. Statements (i) and (ii) are elementary, and in fact correspond to already mentioned facts: see 
Example \ref{esge}, noting that both $\viz$ and $\biz$ have the form assumed therein for the vector field $\wz$. Statement (iv) 
is obvious, and we are left with the verification of (iii). \parn
In order to check (iii), we first note that Example \ref{esge} with $\wz = \viz$ or $\wz = \biz$ gives
$$ \viz, \biz \in \HM{\infty} \quad  \Longleftrightarrow \quad V \sc k = 0, \,\, \Bi \sc \ell = 0 $$
\beq \Longleftrightarrow \quad 
\mbox{$\viz, \biz$ are generalized Beltrami flows, and the condition on $\Delta \viz$, $\Delta \biz$ in 
\rref{cobe} hold with $\kappa = |k|$, $\lambda = |\ell|$}\,. \label{vbk} \feq
We now discuss the validity conditions for Eq. \rref{cobe2}
$(\viz \sc \nabla) \biz = (\biz \sc \nabla) \viz$. To this purpose we note that
writing $((\viz \sc \nab) \biz)_r = \sum_{s=1}^d \vi_{0 s} \partial_s \bi_{0 r}$ we get
$((\viz \sc \nab) \biz)(x) = 2 (2 \pi)^{- d} (\Vi \sc \ell) \Bi \sin(k \sc x) \cos(\ell \sc x + \phi)$; we now express
$\sin, \cos$ in terms of exponentials and recall that $e^{i h \sc x} = (2 \pi)^{d/2} e_h(x)$ for all $h \in \Zd$, 
thus concluding that
\beq (\viz \sc \nab) \biz = P e_{k + \ell} + Q e_{k-\ell} + \overline{P} e_{-(k + \ell)} + \overline{Q} e_{-(k - \ell)} \,, 
\quad P := {e^{i \phi} \over 2 (2 \pi)^{d/2} i} (\Vi \sc \ell) \Bi\,, \quad Q := {e^{-i \phi} \over 2 (2 \pi)^{d/2} i} (\Vi \sc \ell) \Bi\,. \feq
In a similar way we obtain
$((\biz \sc \nab) \viz)(x) = 2 (2 \pi)^{- d} (\Bi \sc k) \Vi \sin(\ell \sc x + \phi) \cos(k \sc x)$, i.e.,
\beq (\biz \sc \nab) \viz = R e_{k + \ell} + S e_{k-\ell} + \overline{R} e_{-(k + \ell)} + \overline{S} e_{-(k - \ell)} \,, 
\quad R := {e^{i \phi} \over 2 (2 \pi)^{d/2} i} (\Bi \sc k) \Vi\,, \quad S := - {e^{-i \phi} \over 2 (2 \pi)^{d/2} i} (\Bi \sc k) \Vi\,. \feq
Thus $(\viz \sc \nab) \biz = (\biz \sc \nab) \viz$ $\Longleftrightarrow$ $P=R$ and $Q=S$ $\Longleftrightarrow$ $(\Vi \sc \ell) \Bi = (\Bi \sc k) \Vi$
and $(\Vi \sc \ell) \Bi = - (\Bi \sc k) \Vi$; so,
\beq (\viz \sc \nab) \biz = (\biz \sc \nab) \viz \qquad \Longleftrightarrow \qquad (\Vi \sc \ell) \Bi = 0, \,\, (\Bi \sc k) \Vi = 0\,. \label{621} \feq
Comparing Definition \ref{debe} with Eqs. \rref{vbk} \rref{621}, we conclude that (iii) is true.
\end{rema}
\begin{rema}
\textbf{Remark.} If $d=3$, $V \sc k = 0, \Bi \sc \ell = 0$ and $\Vi, \Bi \neq 0$, both vector fields $\viz, \biz$ in the previous Example
\ref{bestes} are generalized Beltrami flows, but \emph{not} Beltrami flows: see Remark \ref{notbel}, noting again that $\viz$ and $\biz$
have the form of the vector field $\wz$ considered therein.
\fine
\end{rema}
Of course, for $d=3$ one would like to build a generalized Beltrami pair $\bviz = (\viz, \biz)$ in which both components 
are Beltrami flows. According to Example \ref{estriv}, a more or less trivial pair of this kind can be obtained putting
$\bviz = (\wz, \alpha \wz)$ or $\bviz = (\alpha \wz, \wz)$,
where $\wz$ is a Beltrami flow and $\alpha \in \reali$.
Hereafter we present another generalized Beltrami pair, made of two Beltrami flows.
\begin{rema}
\label{esetrkal}
\textbf{Example.} 
Let $\viz, \biz : \To^3 \to \reali^3$ be the $C^\infty$ vector fields defined as follows: 
\beq \viz(x) = {1 \over (2 \pi)^{3/2}} \Big[ \ep\, (\alpha, \beta, 0)  \sin(\kappa x_3) +  (-\beta, \alpha, 0)  \cos(\kappa x_3) \Big], \quad 
\biz(x) =  {1 \over (2 \pi)^{3/2}} \Big[ \si (\gamma, \delta, 0) \sin(\lambda x_3) +  (-\delta, \gamma, 0) \cos(\lambda x_3) \Big] \feq
$$ \mbox{for $x = (x_1, x_2, x_3) \in \To^3$}\, \qquad (\alpha, \beta,
\gamma, \delta \in \reali, \ep, \sigma \in \{\pm 1\}, \kappa, \lambda \in \{1,2,3,...\})\,. $$
Both $\viz$ and $\biz$ have the form of the vector field $\wz$ in Example \ref{esbeord}. Thus, both $\viz$ and $\biz$ are Beltrami flows; more
precisely, 
\beq \rot \viz = \ep\, \kappa \, \viz\,, \qquad \rot \biz = \si \lambda \, \biz\,. \feq
According to Proposition \ref{ciprop}, $\viz$ and $\biz$ fulfill Eq. \rref{cobe} in Definition \ref{debe} about generalized Beltrami pairs; 
it is readily checked that Eq. \rref{cobe2} therein holds as well, since 
$(\viz \sc \nabla) \biz = (\biz \sc \nabla) \viz = 0$. Summing up, $\bviz := (\viz, \biz)$ \emph{is a generalized Beltrami pair}. 
\par 
Let us also 
mention that, according to Example \ref{esbeord}, we have $\| \viz \|_{L^2} = \sqrt{\alpha^2 + \beta^2}$, 
$\| \biz \|_{L^2} = \sqrt{\gamma^2 + \delta^2}$ and
\beq \| \viz \|_{p} = \kappa^p \sqrt{\alpha^2 + \beta^2}\,, \qquad  \| \biz \|_{p} = \lambda^p \sqrt{\gamma^2 + \delta^2}
\qquad \mbox{for all $p \in \reali$}\,. \feq
\end{rema}
\begin{rema}
\textbf{Remark.} Let $d \in \{2,3,...\}$. In all the examples $\bviz = (\viz, \biz)$ of generalized Beltrami pairs
presented in this section the norms $\| \viz \|_p, \| \biz \|_p$, and thus the norm
$\| \bvi \|_p$ of any real order $p$, can be made arbitrarily large choosing appropriately the parameters
in the definitions of $\viz$ and $\biz$. This justifies the statements in the 
initial lines of the present section, and their anticipation in Remark \ref{remcit} (ii). 
\end{rema}

\vskip 0.1cm \noindent
\textbf{Acknowledgments.}
We acknowledge support from INdAM, Gruppo Nazionale per la Fisica Matematica.
L.P. also acknowledges support from:
INFN, projects MMNLP and BELL; MUR, project PRIN 2020 “Hamiltonian and dispersive PDEs”;
Universit\`{a} degli Studi di Milano.
We acknowledge G. Gallavotti
for discussions on global stability and Beltrami-type initial data in the NS case. \par \noindent
The LaTeX source file of the present work uses the \verb"elsarticle" document class.
\vskip -0.0cm \noindent
\par \noindent


\bibliographystyle{elsarticle-num}







\end{document}